\newtheorem{theorem}{Theorem}[section]
\newtheorem{proposition}[theorem]{Proposition}
\newtheorem{coro}[theorem]{Corollary}
\theoremstyle{definition}
\newtheorem {definition}[theorem]{Definition}
\newtheorem{remark}[theorem]{Remark}
\newtheorem*{acknow}{Acknowledgments}
\numberwithin{equation}{section}
\numberwithin{theorem}{section}
\def\be{\begin{equation}}
\def\ee{\end{equation}}
\def\bae{\begin{eqnarray}}
\def\eae{\end{eqnarray}}
\def\E{\mathbb{E}}
\begin{document}

\title[Limits for circular Jacobi ensembles]{Limits for circular Jacobi  beta-ensembles}

\author{Dang-Zheng Liu}  \address{School of Mathematical Sciences, University of Science and Technology of China, Hefei 230026, P.R. China \&\
Wu Wen-Tsun Key Laboratory of Mathematics, USTC, Chinese Academy of Sciences, Hefei 230026, P.R. China}
\email{dzliu@ustc.edu.cn}

 \date{\today}
\keywords{Random matrices, Circular Jabobi ensembles,  Jack polynomials, Multivariate hypergeometric functions}

 \subjclass[2010]{60B20,41A60}

\begin{abstract}
Bourgade, Nikeghbali and Rouault recently proposed a matrix model for the circular Jacobi $\beta$-ensemble, which is a generalization of the Dyson circular $\beta$-ensemble but   
equipped with an additional parameter $b$, and further studied its limiting spectral measure.   We calculate the scaling  limits for  expected products  of 
characteristic polynomials of circular Jacobi $\beta$-ensembles. For the fixed constant $b$, the resulting limit near the spectrum singularity
is proven to be a new   multivariate   function. When  $b=\beta Nd/2$, the scaling limits in the bulk and at the soft edge 
agree with those of the Hermite (Gaussian), Laguerre (Chiral) and Jacobi $\beta$-ensembles proved in \cite{dl}.  As corollaries, for even $\beta$  the scaling  limits  
of point correlation functions for the  ensemble  are given. Besides, a transition  from  the spectrum singularity  to the soft edge limit  is observed as $b$ goes to infinity.
The positivity of two special multivariate hypergeometric functions, which appear as one factor   of the joint eigenvalue densities for spiked Jacobi/Wishart  $\beta$-ensembles and Gaussian   $\beta$-ensembles with source, will also be shown.
\end{abstract}

\maketitle

%%\small
%%\tableofcontents
%%\normalsize
%%
%% \newpage
\section{Introduction}

\subsection{Circular Jacobi ensembles}

The circular unitary ensemble in random matrix theory usually refers to the unitary group $U(N)$ with its invariant Haar measure. The  correlation functions and   moments of characteristic polynomials  for eigenvalues of these random unitary matrices are predicted to be somehow  related with statistical properties of the Riemann zeta function in number theory \cite{ks}. Together with circular orthogonal and symplectic ensembles (see  \cite[Chap. 2]{forrester} for definition and   relationship to compact symmetric spaces) they consist Dyson three-fold way for circular ensembles. Dyson also observed that the induced eigenvalue densities correspond to the Gibbs measure of Coulomb log-gases on the circle at three different inverse temperatures $\beta=1, 2, 4$. For general $\beta>0$  these are called $\beta$-ensembles in the literature.

Inspired by Dumitriu-Edelman tridiagonal matrix models for Gaussian and Laguerre $\beta$-ensembles \cite{due}, Killip and Nenciu constructed   analog matrix models for Dyson circular $\beta$-ensembles and also for Jacobi $\beta$-ensembles  \cite{KN}. Combining circular and Jacobi  ensembles, a more general ensemble  was  defined as the circular Jacobi ensemble  in \cite{forrester,nagao}.  Explicitly, the circular Jacobi ensemble refers to the probability density function  on $[0,2\pi)^{N}$
\begin{equation}
P_{b,N}(\theta_{1},\ldots,\theta_{N})=\frac{1}{M_{N}(\bar{b},b;\beta/2)}\prod_{j=1}^{N}\left( e^{i\frac{\bar{b}-b}{2} (\theta_{j}-\pi) }|1-e^{i\theta_{j}}|^{\bar{b}+b}\right) \ |\Delta_{N}(e^{i\theta})|^{\beta}\label{cjdensity}
\end{equation}
 %%The last part of RHS equals $$\prod_{j=1}^{N} e^{i\frac{\bar{b}-b}{2} (\theta_{j}-\pi)} |1-e^{i\theta_{j}}|^{ \bar{b}+b}=\prod_{j=1}^{N} (1-e^{-i\theta_{j}})^{b} (1-e^{i\theta_{j}})^{\bar{b}}$$
with $\textrm{Re}\{b\}>-1/2$, see \cite[Chap. 3]{forrester} for detailed discussion and  \cite{bnr} for the matrix model realization.
Here the   constant $M_{N}(a', b';\alpha)$    equals to (see, e.g., \cite[Chap. 4]{forrester})
\be \label{Mconstant}M_{N}( a', b';\alpha)= (2\pi)^{N}\prod_{j=0}^{N-1}
\frac{\Gamma(1+ \alpha+j \alpha) \Gamma(1+a'+b'+j \alpha)}
{\Gamma(1+ \alpha)\Gamma(1+a'+j \alpha)\Gamma(1+b'+j \alpha)}.
 \ee
 and  the Vandermande determinant  $$ \Delta_{N}(e^{i\theta})=\prod_{1\leq j<k\leq N}(e^{i\theta_{k}}-e^{i\theta_{j}}).$$

For $\beta=2$, such measures were  first  studied by Hua \cite{hua} and later by Nagao \cite{nagao} motivated by random matrix theory. For $\beta=1, 2, 4$ the point correlation functions and their scaling limits were
investigated by Forrester and Nagao \cite{fn01}. Note that for $b=0$ \eqref{cjdensity} reduces to the density for the circular ensemble and for $b \in \tfrac{\beta}{2}\mathbb{N}$ it coincides with the circular ensemble given that there is an eigenvalue at 1. For general $b$ it is said to describe
a spectrum singularity.

Applying an appropriate change of variables we can derive some classical random matrix ensembles from \eqref{cjdensity}. For instance, with
$$e^{i\theta_{j}}=\frac{1+i\lambda_{j}}{1-i\lambda_{j}}, \qquad j=1, 2, \ldots, N,$$  \eqref{cjdensity} reduces to the
Cauchy ensemble, which shows that both ensembles are in fact equivalent, see Chapter 3.9, \cite{forrester}. Another example is that
the Laguerre ensemble  can also be treated as a limit transition from \eqref{cjdensity}.
The latter fact is a remark of E.M. Rains (AIM workshop 2009) communicated to the author by P.J. Forrester, which can be formally inferred as follows.
Let $c=\bar{b}+b$ be a fixed constant and let  $\textrm{Im}\{b\}=i\frac{\bar{b}-b}{2}$ go to $-\infty$, with change of variables
$$\theta_j=-\frac{x_j}{\textrm{Im}\{b\}}, \qquad j=1, 2, \ldots, N,$$ we see that
\begin{align}P_{b,N}(\theta)d^{N}\theta  \sim
 \frac{1}{Z_N} \prod_{j=1}^{N} x_{j}^{c} e^{-x_j}  \ |\Delta_{N}(x)|^{\beta}d^{N}x.
\end{align}
Here all $x_j\in (0,\infty)$ and  \be Z_N= M_{N}(\bar{b},b;\beta/2)   e^{\pi N\textrm{Im}\{b\}}(-\textrm{Im}\{b\})^{(c+1)N+\beta N(N-1)/2}\ee
goes to the normalization constant for the Laguerre ensemble (see (3.129) in \cite{forrester} for the explicit constant) as $\textrm{Im}\{b\}\rightarrow -\infty$ with the help of Stirling formulas.

\subsection{Limiting  spectral measure} The empirical spectral distribution  for the circular Jacobi ensemble refers to
\be \mu_N=\frac{1}{N}\sum_{k=1}^{N}\delta_{e^{i\theta_k}}.\ee
With $b=b_N=\beta N\textrm{d}/2$ where $\textrm{Re}\{\textrm{d}\}\geq 0$, as $N\rightarrow \infty$   Bourgade etc. \cite{bnr} have proved that  $\mu_N$ converges weakly in probability to some  limit $\mu_{\textrm{d}}(\zeta)$ ($\zeta=e^{i\theta}$).   Jokela etc.  \cite{jok}  gave  another alternative derivation
from the viewpoint of  the equilibrium measure.   To give the explicit form of $\mu_{\textrm{d}}(\zeta)$, we need some more notations.

Let $\theta_\textrm{\textrm{d}}\in [0,\pi)$ and $\xi_\textrm{d} \in [-\theta_\textrm{d},\theta_\textrm{d}]$ be such that
\be \sin\frac{\theta_\textrm{d}}{2}=\left|\frac{\textrm{d}}{1+\textrm{d}}\right|, \qquad e^{i \xi_\textrm{d}}=\frac{1+\textrm{d}}{1+\bar{\textrm{d}}}.\ee
If $\textrm{Re}\{\textrm{d}\}\geq 0, \textrm{d}\neq 0$, let $\omega_{\textrm{d}}$ be defined by
\be \omega_{\textrm{d}}(\theta)=\begin{cases}  \frac{\sqrt{\sin^{2}((\theta-\xi_\textrm{d})/2)-\sin^{2}(\theta_\textrm{d}/2)}}{|1/(1+\bar{\textrm{d}})|\sin(\theta/2)}, & \theta \in (\theta_\textrm{d}+\xi_\textrm{d}, 2\pi-\theta_\textrm{d}+\xi_\textrm{d}) \\ 0, & \textrm{otherwise}. \end{cases} \ee
In this case, let \be d\mu_{\textrm{d}}(\zeta):=\omega_{\textrm{d}}(\theta)\frac{\textrm{d}\theta}{2\pi} \ (\zeta=e^{i\theta}),\ee
and for $\textrm{d}=0$, let
\be \textrm{d}\mu_{0}(\zeta):= \frac{\textrm{d}\theta}{2\pi}\  (\zeta=e^{i\theta}), \ee
the Haar measure on the unit circle.

In particular, if $\textrm{d}\in \mathbb{R}$ and $\textrm{d}\geq 0$, then $\xi_\textrm{d}=0$. For  $\textrm{d}>0$, $\theta=\theta_\textrm{d}$ (or $2\pi-\theta_\textrm{d}$) is the so-called soft edge of the spectrum  while the interval $(\theta_\textrm{d}, 2\pi-\theta_\textrm{d})$ is the bulk of the spectrum. In this case,  we will investigate the local limits for correlations  of characteristic polynomials in the circular Jacobi ensembles at the edge
and in the bulk as in the Hermite (Gaussian), Laguerre (Chiral) and Jacobi $\beta$-ensembles of \cite{dl}.

\subsection{Correlations% of characteristic polynomials
}

For the characteristic polynomial of the circular Jacobi ensemble denoted by \be \phi(z)=\prod_{j=1}^{N}(1-z e^{i\theta_{j}}) \ee and its conjugate by
\be \overline{\phi(z)}=\prod_{j=1}^{N}(1-\bar{z} e^{-i\theta_{j}}), \ee
our aim is to prove local limits of the expectation of products of characteristic polynomials
\begin{equation}\label{productmean}
K_{b,N}(s;t)=\E[\, \prod_{j=1}^m \phi(s_{j}) \prod_{k=1}^n \overline{\phi(t_{k})}\, ],
\end{equation}
as $N\rightarrow \infty$. Here $\E[f]$ means the expectation of $f$ under the circular Jacobi ensemble.

At this point, it is worth stressing  that the above average $K_{b,N}(s;t)$ allows the evaluation of $k$-point correlation functions for even $\beta$.
Suppose for convenience that there are $k+N$ eigenvalues denoted by $e^{ir_1},\ldots, e^{ir_k}, e^{i\theta_1}, \ldots, e^{i\theta_N}$ in the circular Jacobi
ensemble,  or equivalently the density function of \eqref{cjdensity} is replaced by $P_{b, k+N}(r_{1},\ldots,r_{k},\theta_{1},\ldots,\theta_{N})$.  Accordingly, the $k$-point correlation function \cite{forrester,mehta} is specified as a canonical average by
\begin{multline} \label{defpcf}R_{b,N}^{(k)}(r_1,\ldots,r_k)= \frac{(k+N)!}{N!} \frac{1}{M_{k+N}(\bar{b},b;\beta/2)}
|\Delta_{k}(e^{ir})|^{\beta}\prod_{j=1}^{k} e^{i\frac{\bar{b}-b}{2} (r_{j}-\pi) }|1-e^{ir_{j}}|^{\bar{b}+b} \\
\times \int  \prod_{l=1}^{k}\prod_{j=1}^{N}  |e^{ir_{l}}-e^{i\theta_{j}}|^{\beta}
\prod_{j=1}^{N} e^{i\frac{\bar{b}-b}{2} (\theta_{j}-\pi) } |1-e^{i\theta_{j}}|^{\bar{b}+b} \ |\Delta_{N}(e^{i\theta})|^{\beta} d^{N} \theta.
\end{multline}
Therefore for even $\beta$, if we let $m=\beta k/2$ and introduce\be \label{variablescptopcf} s_{l+ \beta(j-1)/2}=e^{-ir_j}, \quad j=1,\ldots,k, l=1,\ldots, \beta/2,\ee
 then   it is easy to verify that
\begin{align} R_{b,N}^{(k)}(r_1,\ldots,r_k)&= \frac{(k+N)!}{N!} \frac{ M_{N}(\bar{b},b;\beta/2)}{M_{k+N}(\bar{b},b;\beta/2)}
|\Delta_{k}(e^{ir})|^{\beta} \nonumber\\
&\times \prod_{j=1}^{k} e^{i\frac{\bar{b}-b}{2} (r_{j}-\pi) }|1-e^{ir_{j}}|^{\bar{b}+b}\ K_{b,N}(s;s)\label{cptopcf}.
\end{align}

\subsection{Goal and organization of the paper}
%%%known results

We  consider the expectation of products of characteristic polynomials \eqref{productmean} for the circular Jacobi $\beta$-ensemble defined by \eqref{cjdensity}. Our aim is to prove the   local limit for the  expectation as $N\rightarrow \infty$ and to give  explicit expression of the limit function in terms of Jack polynomials  for any $\beta>0$.  As a direct application, for even $\beta$  we   prove the local limit of  correlation functions.  The similar results have been shown for other  three classical $\beta$-ensembles in random matrix theory, namely the  Hermite (Gaussian), Laguerre (Chiral), and  Jacobi $\beta$-ensembles \cite{dl}.

For special values of the Dyson index $\beta=1, 2, 4$, we recover more conventional random matrix ensembles, see \cite{forrester,mehta}.  Indeed,
for   $\beta=1,2,4$,  the scaling limits for  ratios and products of characteristic polynomials  are already well established, see \cite{af,bds,bhn08,bs,hko01} and references therein.

In the present article we deal with   the  general $\beta$ case. For the Gaussian $\beta$-ensembles,  Aomoto \cite{aom2} and more recently Su \cite{su} proved the scaling limit for the product of $n=2$ characteristic polynomials. In the previous paper \cite{dl}, we (with P. Desrosiers) completely proved the scaling limit for
products of arbitrary $n$ characteristic polynomials in the  Hermite (Gaussian), Laguerre (Chiral), and  Jacobi $\beta$-ensembles.  Other  properties concerning   $\beta$-ensembles, for instance,
duality and Jack polynomials\cite{bf,des,df,dl14,duk,forrester0,mat},  stochastic differential equations   \cite{es,rr,rrv},   the global fluctuations \cite{bg13,due2,due3,kill}, the gap probabilities  and the
largest eigenvalues \cite{dv,hmf,jiang,rrv,rrz,vv,vv2},   were also studied in the limit $N\to \infty$.  More recently, some universality results  concerning the general $\beta>0$ and general potential case have been obtained \cite{bey,bey2}. Much progress has been made for the circular Jacobi ensembles, see \cite{bnr13,cnn14,fn01,mnn13,nnr13,su09}.

The rest of this paper is organized as follows. In Section 2 we briefly review some aspects of Jack polynomials and associated hypergeometric functions. The positivity of two special functions ${\phantom{j}}_1\mathcal{F}^{(\alpha)}_0$ and ${\phantom{j}}_0\mathcal{F}^{(\alpha)}_0$ will be shown.
In Sections 3 and 4 we turn to calculations of the local limits of the bulk and edge respectively for the fixed constant $b$  and  for $b=\beta Nd/2$.
In particular for the former a new multivariate function is found to appear as a scaling limit near the spectrum singularity. In Section 5 we derive a limit transition from the spectrum singulary to the soft edge.

\section{Jack polynomials and hypergeometric functions}
\label{hypergeometric}
This section   provides a brief review of  Jack polynomials  and associated hypergeometric functions \cite{kaneko,stanley,yan}, see \cite[Chap. 12]{forrester} for more recent textbook treatments. %A few results stated here have been proved in the previous paper \cite{dl}  and  will be used in the subsequent sections.

\subsection{Partitions}

A partition $\kappa = (\kappa_1, \kappa_2,\ldots,\kappa_i,\ldots,)$ is a sequence of non-negative integers $\kappa_i$ such that
\begin{equation*}
    \kappa_1\geq\kappa_2\geq\cdots\geq\kappa_i\geq\cdots
\end{equation*}
and only a finite number of the terms $\kappa_i$ are non-zero. The number of non-zero terms is referred to as the length of $\kappa$, and is denoted $\ell(\kappa)$. We shall not distinguish between two partitions that differ only by a string of zeros. The weight of a partition $\kappa$ is the sum
\begin{equation*}
    |\kappa|:= \kappa_1+\kappa_2+\cdots
\end{equation*}
of its parts, and its diagram is the set of points $(i,j)\in\mathbb{N}^2$ such that $1\leq j\leq\kappa_i$.
Reflection in the diagonal produces the conjugate partition
$\kappa^\prime=(\kappa_1',\kappa_2',\ldots)$.

The set of all partitions of a given weight is  partially ordered
by the dominance order: $\kappa\leq \sigma $ if and only if $\sum_{i=1}^k\kappa_i\leq \sum_{i=1}^k \sigma_i$ for all $k$.

\subsection{Jack  polynomials}

Let $\Lambda_n(x)$ be the algebra of symmetric polynomials in $n$ variables $x_1,\ldots,x_n$ with coefficients in the field $\mathbb{F}=\mathbb{Q}(\alpha)$, which is  the field of  rational functions in the parameter $\alpha$.  As a ring, $\Lambda_n(x)$ is generated by the power-sums:
\be \label{powersums} p_k(x):=x_1^k+\cdots+x_n^k. \ee
The ring of symmetric polynomials is naturally graded: $\Lambda_n(x)=\oplus_{k\geq 0}\Lambda^k_n(x)$, where $\Lambda^k_n(x)$ denotes the set of homogeneous polynomials of degree $k$.   As a vector space, $\Lambda^{k}_n(x)$   equals to the span over  $\mathbb{F}$ of all symmetric monomials $m_\kappa(x)$, where $\kappa$ is a partition of weight $k$ and
\be  m_\kappa(x):=x_1^{\kappa_1}\cdots x_n^{\kappa_n}+\text{distinct permutations}.\nonumber
\ee
Note that if the length of the partition $\kappa$ is larger than $n$,  we set $m_\kappa(x)=0$.

The whole ring $\Lambda_n(x)$ is invariant under the action of  homogeneous differential operators related to the Calogero-Sutherland models \cite{bf}:
\be E_k=\sum_{i=1}^n x_i^k\frac{\partial}{\partial x_i}, \ D_k=\sum_{i=1}^n x_i^k\frac{\partial^2}{\partial x_i^2}+\frac{2}{\alpha}\sum_{1\leq i\neq j \leq n}\frac{x_i^k}{x_i-x_j}\frac{\partial}{\partial x_i},\  k=0,1,2,\ldots .
\ee
The operators $E_1$ and $D_2$   can be used to define the Jack polynomials.  Indeed, for each partition $\kappa$, there exists a unique symmetric polynomial $P^{(\alpha)}_\kappa(x)$ that satisfies the following two  conditions \cite{stanley}:
\begin{align} \label{Jacktriang}(1)\qquad&P^{(\alpha)}_\kappa(x)=m_\kappa(x)+\sum_{\mu<\kappa}c_{\kappa\mu}m_\mu(x)&\text{(triangularity)}\\
\label{Jackeigen}(2)\qquad &\Big(D_2-\frac{2}{\alpha}(n-1)E_1\Big)P^{(\alpha)}_\kappa(x)=\epsilon_\kappa P^{(\alpha)}_\kappa(x)&\text{(eigenfunction)}\end{align}
where  $\epsilon_\kappa, c_{\kappa\mu} \in \mathbb{F}$.  Because of the tiangularity condition, $\Lambda_n(x)$ is also equal to the span over $\mathbb{F}$ of all Jack polynomials  $P^{(\alpha)}_\kappa(x)$, with $|\kappa|\leq n$.

\subsection{Hypergeometric series}\label{subsecseries}

For    $(i,j) \in \kappa$, let
\begin{equation}\label{lengths}
    a_\kappa(i,j) = \kappa_i-j\qquad\text{and}\qquad l_\kappa(i,j) = \kappa^\prime_j-i.
\end{equation}Introduce the hook-length of $\kappa$    defined by
\begin{equation} \label{defhook}
    h_{\kappa}^{(\alpha)}=\prod_{(i,j)\in\kappa}\Big(1+a_\kappa(i,j)+\tfrac{1}{\alpha}l_\kappa(i,j)\Big),
\end{equation}
and  the   $\alpha$-deformation of the Pochhammer symbol by
\begin{equation}\label{defpochhammer}
    [x]^{(\alpha)}_\kappa = \prod_{1\leq i\leq \ell(\kappa)}\Big(x-\tfrac{i-1}{\alpha}\Big)_{\kappa_i}.
    %= \prod_{(i,j)\in\kappa}\Big(x+a^\prime_\kappa(i,j)-\frac{1}{\alpha}l^\prime_\kappa(i,j)\Big).
\end{equation}
Here  $(x)_j\equiv x(x+1)\cdots(x+j-1)$.

With the  previous notations  we   now turn  to   the  precise definition of the hypergeometric series associated with Jack polynomials, see e.g. \cite{kaneko,yan}.
 Fix $p,q\in\mathbb{N}_0=\{0, 1, 2,\ldots\}$ and let $a_1,\ldots,a_p, b_1,\ldots,b_q$ be complex numbers such that $(i-1)/\alpha-b_j\notin\mathbb{N}_0$ for all $i\in\mathbb{N}_0$.
The $(p,q)$-type hypergeometric series in two sets of $n$ variables  $x=(x_1,\ldots,x_n)$ and  $y=(y_1,\ldots,y_n)$  reads off
\begin{align}\label{hfpq2}
    {\phantom{j}}_p\mathcal{F}^{(\alpha)}_{q}&(a_1,\ldots,a_p;b_1,\ldots,b_q;x;y)
                = \nonumber\\
             & \sum_{k=0}^{\infty}\, \sum_{\ell(\kappa)\leq n,|\kappa|=k} \frac{1}{h_{\kappa}^{(\alpha)}}\frac{\lbrack a_1\rbrack^{(\alpha)}_\kappa\cdots\lbrack a_p\rbrack^{(\alpha)}_\kappa}{\lbrack b_1\rbrack^{(\alpha)}_\kappa
    \cdots\lbrack b_q\rbrack^{(\alpha)}_\kappa}\frac{P_{\kappa}^{(\alpha)}(x)P_{\kappa}^{(\alpha)}(y)}{P_{\kappa}^{(\alpha)}(1^{(n)})},
\end{align}
where  the shorthand notation $1^{(n)}$ stands for $ 1,\ldots,1 $  with  $n$ times and
\be P_{\kappa}^{(\alpha)}(1^{(n)})=\prod_{(i,j)\in\kappa}\frac{n+\alpha (j-1)-(i-1) }{1+\alpha a_\kappa(i,j)+ l_\kappa(i,j)}
%=[n/\alpha]^{(\alpha)}_\kappa \prod_{(i,j)\in\kappa}\frac{\alpha }{1+\alpha a_\kappa(i,j)+ l_\kappa(i,j)}
,\label{Pvalue}\ee
see (10.20),\cite{macdonald}.
In particular, when   $y_1=\cdots=y_n=1$, it reduces to the hypergeometric series
\begin{equation}\label{hfpq}
  {\phantom{j}}_p F^{(\alpha)}_{q}(a_1,\ldots,a_p;b_1,\ldots,b_q;x) = \sum_{k=0}^{\infty}\sum_{|\kappa|=k} \frac{1}{h_{\kappa}^{(\alpha)}}\frac{\lbrack a_1\rbrack^{(\alpha)}_\kappa\cdots\lbrack a_p\rbrack^{(\alpha)}_\kappa}{\lbrack b_1\rbrack^{(\alpha)}_\kappa
    \cdots\lbrack b_q\rbrack^{(\alpha)}_\kappa}P_{\kappa}^{(\alpha)}(x).
\end{equation}

From now on we always assume $\alpha>0$.  Note that when $p\leq q$, the   series \eqref{hfpq}  converges absolutely for all $x\in\mathbb{C}^n$,  while for $p=q+1$,  \eqref{hfpq} converges absolutely for   all $ | x_i|<r$ with some positive constant $r$, see \cite[Proposition 1]{kaneko} for more details about the convergence issue. It immediately follows from the definition and the convergence of \eqref{hfpq} that  for $p\leq q$  \eqref{hfpq2}    converges absolutely for all $x, y\in\mathbb{C}^n$, while for $p=q+1$  \eqref{hfpq2} converges absolutely for   all $ | x_i y_j|<r$.

In the special case of $p=1, q=0$ we can say more about the convergence since
\begin{equation}\label{hf10}
  {\phantom{j}}_1 F^{(\alpha)}_{0}(a;x) = \prod_{j=1}^{n}(1-x_j)^{-a},
\end{equation}
 see \cite[Proposition 3.1]{yan}. Therefore, the series${\phantom{j}}_1 F^{(\alpha)}_{0}(a;x)$ converges for   all $ | x_i|<1$ and can be extended to all $ \mbox{Re}\{ x_i\}<1$ by analytic continuation. Furthermore, it is easy to show that ${\phantom{j}}_1 \mathcal{F}^{(\alpha)}_{0}(a;x;y)$  converges   for   all $ | x_i y_j|<1$. We will prove that the series ${\phantom{j}}_1 \mathcal{F}^{(\alpha)}_{0}(a;x;y)$  can be extended to all $ \mbox{Re}\{ x_i y_j\}<1$ by analytic continuation in the subsequent subsection.

When $\alpha=1$ there exist closed-form expressions  for the hypergeometric series \eqref{hfpq2} and \eqref{hfpq}.

\begin{proposition}[Closed form for $\alpha=1$] \label{alpha=1}
\begin{align}\label{hfpq2-1}
    {\phantom{j}}_p\mathcal{F}^{(1)}_{q}&(a_1+n-1,\ldots,a_p+n-1;b_1+n-1,\ldots,b_q+n-1;x;y)\nonumber\\
    &=  \prod_{i=0}^{n-1}\frac{i!\,(b_1)_{i} \cdots (b_q)_{i} }{(a_1)_{i} \cdots (a_p)_{i}} \frac{\det[_{p}f_{q}(x_{j}y_{k})]}{\det[x_j^{k-1}]\,\det[y_j^{k-1}]},
\end{align}
and
\begin{align}\label{hfpq-1}
    {\phantom{j}}_pF^{(1)}_{q}&(a_1+n-1,\ldots,a_p+n-1;b_1+n-1,\ldots,b_q+n-1;x)\nonumber\\
    &=                            \prod_{i=0}^{n-1}\frac{i!\,(b_1)_{i} \cdots (b_q)_{i} }{(a_1)_{i} \cdots (a_p)_{i}}
    \frac{\det[{x^{k-1}_{j}} { {_{p}f_{q}^{(k-1)}}}(x_{j})]}{\det[x_j^{k-1}]}.
\end{align}
Here
$$_{p}f_{q}(z)=\sum_{k=0}^{\infty}\frac{(a_1)_{k} \cdots (a_p)_k}{(b_1)_{k} \cdots (b_q)_k } \frac{z^{k}}{k!}.$$

\end{proposition}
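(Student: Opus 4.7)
The plan is to exploit that at $\alpha=1$ the Jack polynomials reduce to Schur polynomials, $P_\kappa^{(1)}=s_\kappa$, and $h_\kappa^{(1)}$ becomes the usual product of hook lengths $H(\kappa)$. The whole argument then rests on recognizing the series on the left as a formal Cauchy--Binet expansion of the determinant on the right, for which the Jacobi bialternant formula for $s_\kappa$ is the perfect bridge.

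To prove (2.10), I would first simplify the summand in $_p\mathcal{F}_q^{(1)}$ by introducing $\lambda_i=\kappa_i+n-i$ (so $\lambda_1>\lambda_2>\cdots>\lambda_n\geq 0$). The shifted Pochhammer telescopes as
\[
[a+n-1]_\kappa^{(1)}=\prod_{i=1}^{n}(a+n-i)_{\kappa_i}=\prod_{i=1}^n\frac{(a)_{\lambda_i}}{(a)_{n-i}},
\]
while the hook--content identity yields $H(\kappa)\,s_\kappa(1^{(n)})=\prod_{i=1}^n\lambda_i!/(n-i)!$. Combining these, the coefficient of $s_\kappa(x)s_\kappa(y)$ in the $_p\mathcal{F}_q^{(1)}$ series with shifted parameters factorizes as the $\kappa$--independent prefactor $\prod_{i=0}^{n-1}\frac{i!\,(b_1)_i\cdots(b_q)_i}{(a_1)_i\cdots(a_p)_i}$ times $\prod_{i=1}^n c_{\lambda_i}$, where $c_m=\frac{(a_1)_m\cdots(a_p)_m}{m!(b_1)_m\cdots(b_q)_m}$ is exactly the Taylor coefficient of $_pf_q(z)$. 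On the other side, using the bialternant formula $s_\kappa(x)=\det[x_j^{\lambda_i}]/\det[x_j^{n-i}]$ and writing $_pf_q(x_jy_k)=\sum_{m\geq 0} c_m(x_jy_k)^m$ as a product of three (infinite) matrices, the infinite Cauchy--Binet identity yields
\[
\det[{}_pf_q(x_jy_k)]=\det[x_j^{k-1}]\det[y_k^{l-1}]\sum_\kappa\Bigl(\prod_{i=1}^n c_{\lambda_i}\Bigr)s_\kappa(x)s_\kappa(y).
\]
Matching the two expressions yields (2.10).

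For (2.11) I would specialize (2.10) at $y=1^{(n)}$: the left-hand side becomes $_pF_q^{(1)}$ immediately from the series definition \eqref{hfpq2}, while the right-hand side requires the confluent limit $\lim_{y\to 1^{(n)}}\det[{}_pf_q(x_jy_k)]/\det[y_j^{k-1}]$. Taylor expanding $_pf_q(x_jy_k)=\sum_{m\geq 0}\frac{(y_k-1)^m}{m!}x_j^m\,{}_pf_q^{(m)}(x_j)$ and applying Cauchy--Binet once more, only the minor indexed by $\{m_1,\ldots,m_n\}=\{0,1,\ldots,n-1\}$ contributes at leading order; since $\det[(y_k-1)^{i-1}]_{i,k}=\det[y_j^{k-1}]$, the limit reduces to $\det[x_j^{k-1}{}_pf_q^{(k-1)}(x_j)]$ up to the universal factor $\prod_{i=0}^{n-1}i!$, which then combines with the prefactor from (2.10) to produce the stated identity.

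The main technical point will be the careful bookkeeping of signs and indices when matching the Schur bialternant expansions against the Cauchy--Binet decomposition of $\det[{}_pf_q(x_jy_k)]$: one has to verify that the reversal of the strictly increasing index set $\{m_i\}$ into the decreasing partition data $\{\lambda_i\}$ produces exactly cancelling signs on the two Vandermonde factors, so that no stray $(-1)^{n(n-1)/2}$ survives. Once this combinatorial identification is made precise, the telescoping of the Pochhammers, the hook--content identity, and the Taylor--Cauchy--Binet confluent limit are all routine, and the $\alpha=1$ closed forms follow without further analytic input.
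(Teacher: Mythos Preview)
Your proposal is correct and follows essentially the same route as the paper. The paper's proof also starts from $P_\kappa^{(1)}=s_\kappa$, invokes the Cauchy--Binet/Schur expansion of $\det[{}_pf_q(x_jy_k)]$ (quoted there as Theorem~1.2.2 of Hua), and then matches coefficients using the identities $[a+n-1]^{(1)}_\kappa=\prod_i (a)_{\lambda_i}/(a)_{n-i}$ and $h_\kappa^{(1)}\,s_\kappa(1^{(n)})=[n]^{(1)}_\kappa=\prod_i\lambda_i!/(n-i)!$---exactly the two ingredients you isolate. For \eqref{hfpq-1} the paper likewise sends $y\to 1^{(n)}$ in \eqref{hfpq2-1} (again citing Hua, Theorem~1.2.4), which is your Taylor/Cauchy--Binet confluent limit. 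The only difference is packaging: the paper treats the Cauchy--Binet expansion and its confluent form as black boxes from Hua, whereas you write them out.

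One small caution on the confluent step: carrying the factorials through carefully, the Taylor--Cauchy--Binet argument gives
\[
\lim_{y\to 1^{(n)}}\frac{\det[{}_pf_q(x_jy_k)]}{\det[y_j^{k-1}]}
=\frac{1}{\prod_{i=0}^{n-1}i!}\,\det\!\bigl[x_j^{k-1}\,{}_pf_q^{(k-1)}(x_j)\bigr],
\]
so the extra $\prod_{i=0}^{n-1}i!$ you mention \emph{cancels} against the one already present in the prefactor of \eqref{hfpq2-1} rather than ``combining'' with it. Make that cancellation explicit when you write it up; otherwise the argument is fine and matches the paper's.
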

\begin{proof} First note that for $\alpha=1$ the Jack polynomial $P_{\kappa}^{(1)}(x)$ is equal to  the Schur polynomial $s_{\kappa}^{(1)}(x)$ which is defined by
$$s_{\kappa}^{(1)}(x)=\frac{\det[ x_{j}^{\kappa_{n+1-k}+k-1}]}{\det[x_j^{k-1}]}.$$
Let $$c_{k}=\frac{(a_1)_{k} \cdots (a_p)_k}{(b_1)_{k} \cdots (b_q)_k } \frac{1}{k!},$$
 by Theorem 1.2.2 \cite{hua}, we have the expansion
 \begin{align}
 \frac{\det[_{p}f_{q}(x_{j}y_{k})]}{\det[x_j^{k-1}]\,\det[y_j^{k-1}]}=\sum_{\kappa}  \prod_{i=1}^{n}c_{\kappa_i+n-i} \, P_{\kappa}^{(1)}(x)P_{\kappa}^{(1)}(y).
 \end{align}

 Since $ \prod_{i=1}^{n}c_{\kappa_i+n-i}$ equals to
$$
 \prod_{i=1}^{n}\frac{ (a_1)_{n-i} \cdots (a_p)_{n-i} }{(b_1)_{n-i} \cdots (b_q)_{n-i}}\frac{1}{(1)_{n-i}}
  \frac{ [a_1+n-1]^{(1)}_{\kappa} \cdots [a_p+n-1]^{(1)}_{\kappa} }{[b_1+n-1]^{(1)}_{\kappa} \cdots [b_q+n-1]^{(1)}_{\kappa}}\frac{1}{[n]^{(1)}_{\kappa}}
$$
 and $$h_{\kappa}^{(1)} P_{\kappa}^{(1)}(1^{(n)})=[n]^{(1)}_{\kappa},$$
 the equation  \eqref{hfpq2-1} follows.

Let all $y_j$ go to 1 and take the derivative  we then obtain   \eqref{hfpq-1} from \eqref{hfpq2-1}, or by Theorem 1.2.4 \cite{hua}.
\end{proof}

\subsection{Invariance property with  application}\label{subsecinvariance}

To prove scaling limits for classical $\beta$-ensembles the translation  invariance property of two special  hypergeometric functions  has been proven to be of vital importance (see \cite[Proposition 2.2]{dl}) and will also play a key role in the present article. Explicitly, for  complex numbers $a, b$    and the variables $x=(x_{1},  \ldots, x_{n})$, let
        \be (a^{(n)})=(\overbrace{a,\ldots,a}^{n}),\qquad \frac{b+a x}{1+cx}=(\frac{b+a x_{1}}{1+c x_1},  \ldots, \frac{b+a x_{n}}{1+c x_n}),\nonumber\ee
  then the following translation  invariance holds
\be \label{vip0}{\phantom{j}}_1\mathcal{F}^{(\alpha)}_0(a;b+x;y)=\prod_{j=1}^{n}(1-by_j)^{-a} \,{\phantom{j}}_1\mathcal{F}^{(\alpha)}_0(a;x;\frac{y}{1-by}), \ee
  \be \label{vip1}{\phantom{j}}_0\mathcal{F}^{(\alpha)}_0(a+x;b+y)=\exp\{nab+a p_{1}(y)+b p_{1}(x)\}\,{\phantom{j}}_0\mathcal{F}^{(\alpha)}_0(x;y) \ee
and further
  \begin{align}
&{\phantom{j}}_0 \mathcal{F}_{0}^{(\alpha)}(x_1,\ldots,x_n;a^{(k)}, b^{(n-k)})\nonumber\\
&=e^{b p_1(x)} {\phantom{j}}_1 F^{(\alpha)}_{1}(k/\alpha; n/\alpha; (a-b)x_1,\ldots,(a-b)x_n)\label{vip2}\\ \displaystyle
&=e^{(a-b)kx_1+b p_1(x)} {\phantom{j}}_1 F^{(\alpha)}_{1}(k/\alpha; n/\alpha; (a-b)(x_2-x_1),\ldots,(a-b)(x_n-x_1)).\label{vip3}
\end{align}
See Proposition 2.2 and Corollary 2.3 of \cite{dl} for the proofs of \eqref{vip0}--\eqref{vip3}.

We now give   three application examples of the translation formulas \eqref{vip0} and \eqref{vip1}. The first one is concerned with analytic continuation of the series ${\phantom{j}}_1 \mathcal{F}^{(\alpha)}_{0}$.

   \begin{proposition}\label{analytic10} The domain of the series ${\phantom{j}}_1 \mathcal{F}^{(\alpha)}_{0}(a;x;y)$  can be extended by analytic continuation to some region at least containing $x, y\in\mathbb{C}^n$ such that  \be \label{largedomain} \mathrm{Re}\{ x_i y_j\}<1,\  y_j\geq 0,  \qquad \forall\, i,j=1, \ldots, n.\ee
     \end{proposition}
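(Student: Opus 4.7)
My strategy is to shift the first set of variables by a large negative real constant, via the translation identity \eqref{vip0} specialized to $b=-B$ with $B>0$ real:
\[
{}_1\mathcal{F}^{(\alpha)}_0(a;x;y)\;=\;\prod_{j=1}^{n}(1+By_j)^{-a}\;{}_1\mathcal{F}^{(\alpha)}_0\!\left(a;\,x+B;\,\tfrac{y}{1+By}\right),
\]
which is initially valid on the polydisc where both sides are absolutely convergent power series. The right-hand side, call it $F_B(x,y)$, is analytic in $(x,y)$ on the open set
\[
V_B\;:=\;\bigl\{(x,y):\;|(x_i+B)y_j|<|1+By_j|\ \text{for all } i,j\bigr\},
\]
where I take the principal branch of $(1+By_j)^{-a}$ (unambiguous because $1+By_j>0$ whenever $y_j\ge 0$ and $B>0$, and unambiguous in a complex neighbourhood). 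Hence $F_B$ supplies an analytic continuation on $V_B$, and it suffices to show that $\bigcup_{B>0}V_B$ covers the target region \eqref{largedomain} and that the $F_B$'s glue consistently across different choices of $B$.

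For the covering step, I would take a point with $y_j\ge 0$ and $\mathrm{Re}\{x_iy_j\}<1$ and observe that, for $y_j>0$, the required inequality $|x_i+B|\,y_j<1+By_j$ squares to
\[
2By_j\bigl(1-\mathrm{Re}\{x_iy_j\}\bigr)\;>\;|x_iy_j|^2-1,
\]
whose left-hand side grows without bound in $B$ thanks to the hypothesis $1-\mathrm{Re}\{x_iy_j\}>0$. Choosing $B$ larger than the finitely many thresholds indexed by the pairs $(i,j)$ places the given point, together with a whole neighbourhood of it, inside $V_B$. The case $y_j=0$ is trivial.

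For the gluing step, $F_B$ agrees with ${}_1\mathcal{F}^{(\alpha)}_0(a;x;y)$ on the initial convergence neighbourhood by \eqref{vip0}. For two parameters $B,B'>0$, the intersection $V_B\cap V_{B'}$ contains the origin and, at each fixed $y$, is a Cartesian product of intersections of two Euclidean disks (centred on $-B$ and $-B'$) in each $x_i$-coordinate, hence is convex and connected. The identity theorem then forces $F_B\equiv F_{B'}$ on $V_B\cap V_{B'}$, so the $F_B$'s patch together to a single analytic function $F$ on $\bigcup_{B>0}V_B$, extending the original series to a region containing \eqref{largedomain}.

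The step I expect to be the most delicate is the covering calculation: one needs the shift inequality not just at a single $(x,y)$ but uniformly over a neighbourhood and simultaneously for every pair $(i,j)$, so that the construction genuinely yields an analytic extension and not merely pointwise values. Once the squared-inequality reduction above is in place, this becomes a transparent linear condition on $B$, and the identity-theorem bookkeeping that follows is routine.
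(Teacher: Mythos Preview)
Your proposal is correct and follows essentially the same route as the paper: both use the translation identity \eqref{vip0} with a large real shift $B>0$ and reduce the covering step to the very same squared inequality $2By_j(1-\mathrm{Re}\{x_iy_j\})>|x_iy_j|^2-1$, from which the paper extracts the explicit threshold $b_0=\max_{i,j}\frac{|x_iy_j|^2-1}{2y_j(1-\mathrm{Re}\{x_iy_j\})}$. The only difference is that you add an explicit gluing/identity-theorem discussion that the paper omits; your slice-wise convexity remark does not by itself give connectedness of $V_B\cap V_{B'}$ in $\mathbb{C}^{2n}$, but this is easily repaired (e.g.\ for $y_j\ge 0$ the path $t\mapsto(x,ty)$ followed by $t\mapsto(tx,0)$ stays in $V_B$ and reaches the origin), so no genuine gap remains.
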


  \begin{proof} We just deal with the points $x, y\in\mathbb{C}^n$ satisfying   \eqref{largedomain}.

 First, for  $ | x_i y_j|<1$ ($i,j=1, \ldots, n$), the series converges absolutely,  as  discussed   in Subsection \ref{subsecseries}. Otherwise,   there exist
  $x_i, y_j$ such that $\mathrm{Re}\{ x_i y_j\}<1,\  y_j\geq 0$ but $ | x_i y_j|\geq 1$. Write
   $$b_0=\max\Big\{\frac{| x_i y_j|^{2}-1}{2y_j(1-\mathrm{Re}\{ x_i y_j\})}:1\leq i, j\leq n\Big\},$$
   then $b_0\geq 0$.

  Take $b>b_0$, it's easy to verify  that
\be \label{<1}\big|(b+x_i)\frac{y_j}{1+by_j}\big|<1,\qquad \forall\, i,j=1, \ldots, n.\ee
On the other hand, by \eqref{vip0}
 \be \label{explicitcontinuation}{\phantom{j}}_1\mathcal{F}^{(\alpha)}_0(a;x;y)=\prod_{j=1}^{n}(1+by_j)^{-a} \,{\phantom{j}}_1\mathcal{F}^{(\alpha)}_0(a;b+x;\frac{y}{1+by}), \ee
 therefore by \eqref{<1}  the RHS of \eqref{explicitcontinuation} gives an analytic continuation.
 %\textbf{ Step 1:}   $ | x_i y_j|<1$ ($i,j=1, \ldots, n$).
% In this case the series converges absolutely,  as  discussed   in Subsection \ref{subsecseries}.
%
% \textbf{ Step 2:}   $y_j\geq 0$ and $x_i y_j <1$ ($i,j=1, \ldots, n$). Write
% $$b=\max\{0,-x_1,\ldots,-x_n\},$$
% by \eqref{vip0}
% \be \label{explicitcontinuation}{\phantom{j}}_1\mathcal{F}^{(\alpha)}_0(a;x;y)=\prod_{j=1}^{n}(1+by_j)^{-a} \,{\phantom{j}}_1\mathcal{F}^{(\alpha)}_0(a;b+x;\frac{y}{1+by}). \ee
% In this case note that $1+by_j>0$ and $b+x_i\geq 0$, thus
% $$0\leq (b+x_i)\frac{y_j}{1+by_j}<1.$$
% According to   Step 1, the RHS of \eqref{explicitcontinuation} gives an analytic continuation.
%
% \textbf{ Step 3:}   $x_i y_j <1$ ($i,j=1, \ldots, n$).
  \end{proof}

 The second one  concerns    the positivity of ${\phantom{j}}_1\mathcal{F}^{(\alpha)}_0$ and ${\phantom{j}}_0\mathcal{F}^{(\alpha)}_0$, which appear as
 one factor of the joint eigenvalue densities for spiked Jacobi/Wishart  $\beta$-ensembles and Gaussian  $\beta$-ensembles with
 source  \cite{des,de,dek,forspiked,wang}.   We remark that the positivity of ${\phantom{j}}_0\mathcal{F}^{(\alpha)}_0$ has been proved
 by R\"{o}sler (see Corollary 4.9 \cite{rosler}), but the proof here is quite different.
    \begin{proposition}\label{positivity}  For all $ x_{i},  y_{j} \in \mathbb{R}$ ($i,j=1, \ldots, n$),
\be \label{00positivity}{\phantom{j}}_0\mathcal{F}^{(\alpha)}_0(x;y)>0,\ee
and for all $ x_{i} y_{j}<1, y_j\geq 0 $ ($i,j=1, \ldots, n$) and $a\geq \frac{n-1}{\alpha}$,
\be \label{10positivity} {\phantom{j}}_1\mathcal{F}^{(\alpha)}_0(a;x;y)>0.\ee
\end{proposition}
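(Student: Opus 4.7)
The approach is to reduce both inequalities to the case where all arguments of the series are componentwise nonnegative reals, where positivity follows term by term. The crucial input from Jack-polynomial theory is that, for $\alpha>0$, the monomial expansion $P_\kappa^{(\alpha)}(x)=m_\kappa(x)+\sum_{\mu<\kappa}c_{\kappa\mu}m_\mu(x)$ has all coefficients $c_{\kappa\mu}\geq 0$ (Knop--Sahi), hence $P_\kappa^{(\alpha)}(x)\geq 0$ whenever $x_1,\ldots,x_n\geq 0$. Combined with the manifest positivity of $h_\kappa^{(\alpha)}$ and $P_\kappa^{(\alpha)}(1^{(n)})$, and with $[a]_\kappa^{(\alpha)}\geq 0$ whenever $a\geq (n-1)/\alpha$, every term in \eqref{hfpq2} is $\geq 0$ in the relevant cases, while the $\kappa=\emptyset$ contribution equals $1$; so the sum is $\geq 1$, and in particular strictly positive, on its convergence domain.

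For \eqref{00positivity}, given real $x,y$, I would pick $s,t>0$ large enough that $s+x_i>0$ and $t+y_j>0$ for every $i,j$. Since the $(0,0)$-series is entire, the opening paragraph gives ${\phantom{j}}_0\mathcal{F}^{(\alpha)}_0(s+x;\,t+y)>0$. The invariance identity \eqref{vip1} then rearranges to
$${\phantom{j}}_0\mathcal{F}^{(\alpha)}_0(x;y)=\exp\bigl\{-nst-s\,p_1(y)-t\,p_1(x)\bigr\}\,{\phantom{j}}_0\mathcal{F}^{(\alpha)}_0(s+x;\,t+y),$$
which is the product of a positive exponential and a positive series value.

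For \eqref{10positivity}, under the hypotheses $y_j\geq 0$, $x_i y_j<1$ and $a\geq (n-1)/\alpha$, I would choose $c<0$ with $c<\min_i x_i$; then $x_i-c>0$, $1-cy_j\geq 1>0$, and $y_j/(1-cy_j)\geq 0$. A one-line algebraic check gives
$$(x_i-c)\,\frac{y_j}{1-cy_j}<1 \;\Longleftrightarrow\; x_i y_j<1,$$
so the shifted product-condition still holds, placing the transformed series in its absolute convergence region (e.g.\ by comparison with the closed form ${\phantom{j}}_1F^{(\alpha)}_0(a;z)=\prod(1-z_j)^{-a}$ of \eqref{hf10}). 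The opening paragraph then yields ${\phantom{j}}_1\mathcal{F}^{(\alpha)}_0(a;\,x-c;\,y/(1-cy))\geq 1$, and applying \eqref{vip0} with shift parameter $c$ gives
$${\phantom{j}}_1\mathcal{F}^{(\alpha)}_0(a;x;y)=\prod_{j=1}^n(1-c y_j)^{-a}\,{\phantom{j}}_1\mathcal{F}^{(\alpha)}_0\bigl(a;\,x-c;\,y/(1-cy)\bigr)>0,$$
since each factor $(1-c y_j)^{-a}$ is a positive real power of a positive number.

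The only real obstacle is citing the correct Jack-polynomial nonnegativity on the positive orthant; everything else---the translation identities \eqref{vip0}, \eqref{vip1} and the convergence/analytic-continuation framework---has already been set up in the preceding subsections of the excerpt.
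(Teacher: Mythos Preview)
Your argument is correct and follows essentially the same route as the paper: establish termwise nonnegativity on the positive orthant via Knop--Sahi, then use the translation identities \eqref{vip1} and \eqref{vip0} to shift arbitrary real arguments into the positive orthant (your shift $c<\min_i x_i$ is just $-b$ for the paper's $b=\max_i\{-x_i\}$). The only point you leave implicit is that, since $x_i-c>0$ and $y_j/(1-cy_j)\ge 0$ are real, the inequality $(x_i-c)\,y_j/(1-cy_j)<1$ automatically gives the \emph{absolute} value bound needed for convergence; this is worth stating explicitly.
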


Note that by Proposition \ref{analytic10} the function in \eqref{10positivity}  makes sense in  the assumed region of the argument.
\begin{proof} For \eqref{00positivity}, it holds when all $ x_{i},  y_{j} \geq 0$  ($i,j=1, \ldots, n$) by definition, since the coefficients of Jack polynomials are
nonnegative (see Theorem 1.1 \cite{ks97}). Otherwise, if some  $ x_{i}<0$ or $y_{j}<0$ , set $$a=\max\{-x_1, \ldots, -x_n, 0\}, \qquad b=\max\{-y_1, \ldots, -y_n, 0\},$$
then $a+x_k\geq 0$ and $b+y_l\geq 0$ for all $k, l$. By \eqref{vip1} we have
$${\phantom{j}}_0\mathcal{F}^{(\alpha)}_0(x;y) =\exp\{-nab-a p_{1}(y)-b p_{1}(x)\}\, {\phantom{j}}_0\mathcal{F}^{(\alpha)}_0(a+x;b+y)>0.$$

For \eqref{10positivity}, it holds when all $ x_{i}y_{j}<1, x_{i}\geq 0,  y_{j} \geq 0$  ($i,j=1, \ldots, n$), since all the coefficients are non-negative by definition. Otherwise, there exists  some  $ x_{i}<0$, set  $b=\max\{-x_1, \ldots, -x_n\}$, then $b>0$ and $b+x_j\geq 0$ for all $j$.  Since $(b+x_i)\frac{y_j}{1+by_j}<1$ by the assumption, application of \eqref{vip0} gives
$$ {\phantom{j}}_1\mathcal{F}^{(\alpha)}_0(a;x;y)=\prod_{j=1}^{n}(1+by_j)^{-a} \,{\phantom{j}}_1\mathcal{F}^{(\alpha)}_0(a;b+x;\frac{y}{1+by})>0. $$
 \end{proof}
The last one is concerned  with Dubbs-Edelman $\beta$-MANOVA models with general covariance \cite{de}, which may  also be called spiked $\beta$-Jacobi ensembles. The  $\beta$-MANOVA model builds on the $\beta$-Wishart ensemble which is introduced in \cite{dek,forspiked,wang}, see \cite{de} for the detailed definition. However, for $\beta=1,2,4$, it can be defined as follows, see \cite{de} for the three cases and  \cite{james} for $\beta=1$ case.  Let $X$ be an $m\times n$ matrix and $Y$ be a $p\times n$ matrix with independent real, complex, quaternion normal variables,   and let $\Sigma=\mathrm{diag}\{\sigma_1, \ldots, \sigma_n\}$ with $\sigma_j>0$,  the random matrix $\Lambda=Y^{*}Y(Y^{*}Y+\sqrt{\Sigma}X^{*}X\sqrt{\Sigma})^{-1}$ refers to the $\beta$-MANOVA model.

Let $\lambda_1, \ldots, \lambda_n$ be generalized  eigenvalues of  $\beta$-MANOVA models (that is, the  generalized  eigenvalues of $\Lambda$ when $\beta=1,2,4$),  Dubbs and Edelman  have derived the joint density function for generalized  eigenvalues \cite[Theorem 1.1]{de}. We here just give another expression of the density function after application of the equation \eqref{vip0} and Proposition \ref{analytic10}. Our expression is more similar in form to that of the Jacobi $\beta$-ensemble.

\begin{proposition}\label{pdfmanova} Let $\lambda_1, \ldots, \lambda_n$ be generalized  eigenvalues of  $\beta$-MANOVA models with covariance $\Sigma=\mathrm{diag}\{\sigma_1, \ldots, \sigma_n\}$ defined in \cite{de}, and let $p, m>n-1$. Then the  joint density function for eigenvalues   is equal to
\begin{align}  P_{n\beta}(\lambda)=\frac{\mathcal{K}_{p+m,n}^{(\beta)}}{\mathcal{K}_{p,n}^{(\beta)}\,\mathcal{K}_{m,n}^{(\beta)}}\prod_{i=1}^{n}\lambda_{i}^{\frac{p-n+1}{2}\beta-1}(1-\lambda_{i})^{\frac{m-n+1}{2}\beta-1} \ |\Delta_{n}(\lambda)|^{\beta}\nonumber\\
\times  {\phantom{j}}_1\mathcal{F}^{(2/\beta)}_0\big(\frac{p+m}{2}\beta;\lambda;1-\sigma\big), \qquad 0<\lambda_1, \ldots, \lambda_n<1,\label{10factor}\end{align}
 where
 $$ \mathcal{K}_{m,n}^{(\beta)}= 2^{\beta mn/2} (\Gamma(\beta/2))^{-n} \prod_{i=1}^{n}\Gamma(\beta i/2) \Gamma(\beta (m-n+i)/2).$$

\end{proposition}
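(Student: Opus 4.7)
My plan is to start from the Dubbs--Edelman formulation of the joint eigenvalue density in Theorem 1.1 of \cite{de} and convert it into the form \eqref{10factor} by a single application of the translation invariance \eqref{vip0}. In their formulation the density carries the Jacobi-type weight $\prod\lambda_i^{(p-n+1)\beta/2-1}(1-\lambda_i)^{(m-n+1)\beta/2-1}|\Delta_n(\lambda)|^{\beta}$ together with a prefactor $(\det\Sigma)^{-(p+m)\beta/2}$ and a Jack-hypergeometric factor ${\phantom{j}}_1\mathcal{F}^{(2/\beta)}_0$ whose two argument lists are $1-\lambda$ and $1-\sigma^{-1}$; my task is to rewrite this factor in terms of the arguments $(\lambda;1-\sigma)$.

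The key step will be to apply \eqref{vip0} with shift parameter $b=1$ and substitution $x\mapsto -\lambda$. Then $b+x=1-\lambda$, $1-by_j=\sigma_j^{-1}$, and $y_j/(1-by_j)=\sigma_j-1$, giving
\[
{\phantom{j}}_1\mathcal{F}^{(2/\beta)}_0\bigl(\tfrac{(p+m)\beta}{2};\,1-\lambda;\,1-\sigma^{-1}\bigr)=\prod_{j=1}^{n}\sigma_j^{(p+m)\beta/2}\,{\phantom{j}}_1\mathcal{F}^{(2/\beta)}_0\bigl(\tfrac{(p+m)\beta}{2};\,-\lambda;\,\sigma-1\bigr).
\]
I will then use the Jack-polynomial homogeneity $P_\kappa^{(2/\beta)}(-x)=(-1)^{|\kappa|}P_\kappa^{(2/\beta)}(x)$ to flip signs in both argument lists simultaneously: the two $(-1)^{|\kappa|}$ factors combine to $1$ inside each term of the series, so $(-\lambda;\sigma-1)$ may be replaced by $(\lambda;1-\sigma)$ without any further compensation. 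The resulting factor $\prod_j\sigma_j^{(p+m)\beta/2}=(\det\Sigma)^{(p+m)\beta/2}$ generated by the translation cancels Dubbs--Edelman's $(\det\Sigma)^{-(p+m)\beta/2}$ exactly, producing the hypergeometric factor displayed in \eqref{10factor}.

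Two residual tasks remain. First, the overall constant must be identified with $\mathcal{K}_{p+m,n}^{(\beta)}/(\mathcal{K}_{p,n}^{(\beta)}\mathcal{K}_{m,n}^{(\beta)})$; this is a routine repackaging of the $\beta$-Wishart Selberg-type normalization integrals for $Y^{*}Y$ and for $\sqrt{\Sigma}X^{*}X\sqrt{\Sigma}+Y^{*}Y$ (see \cite{dek,forspiked}) into the gamma-function products defining $\mathcal{K}_{m,n}^{(\beta)}$. Second, the series must be well-defined even when some $\sigma_j>1$, so that $1-\sigma_j<0$; Proposition~\ref{analytic10} handles this, because ${\phantom{j}}_1\mathcal{F}^{(\alpha)}_0(a;x;y)$ is symmetric in its two argument lists, and placing $\lambda\in(0,1)^n$ in the $y$-slot reduces the hypothesis $\mathrm{Re}(x_iy_j)<1$ to $\lambda_j(1-\sigma_i)<\lambda_j<1$, which is automatic. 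The main obstacle is the normalization bookkeeping; the hypergeometric transformation itself is a one-line application of \eqref{vip0} combined with Jack-polynomial parity.
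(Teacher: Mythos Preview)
Your approach is correct and is essentially the same as the paper's: one application of the translation identity \eqref{vip0} together with the Jack-polynomial parity $P_\kappa(-x)=(-1)^{|\kappa|}P_\kappa(x)$, followed by the observation from Proposition~\ref{analytic10} that the resulting series is well-defined.

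The only difference is in the starting point. The paper quotes Dubbs--Edelman's Theorem~1.1 in the form
\[
P_{n\beta}(\lambda)=\frac{\mathcal{K}_{p+m,n}^{(\beta)}}{\mathcal{K}_{p,n}^{(\beta)}\,\mathcal{K}_{m,n}^{(\beta)}}\prod_{i=1}^{n}\lambda_{i}^{\frac{p-n+1}{2}\beta-1}(1-\lambda_{i})^{-\frac{p+n-1}{2}\beta-1}\,|\Delta_{n}(\lambda)|^{\beta}\;{\phantom{j}}_1\mathcal{F}^{(2/\beta)}_0\Bigl(\tfrac{p+m}{2}\beta;-\tfrac{\lambda}{1-\lambda};\sigma\Bigr),
\]
so the $\mathcal{K}$-constant is already present and no ``routine repackaging'' is required. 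In the paper's version of the transformation, the sign flip gives $(\tfrac{\lambda}{1-\lambda};-\sigma)$, and then \eqref{vip0} (applied in the second argument slot, using the $x\!\leftrightarrow\!y$ symmetry of ${\phantom{j}}_1\mathcal{F}^{(\alpha)}_0$, with shift $b=-1$) produces the factor $\prod_i(1-\lambda_i)^{\frac{p+m}{2}\beta}$, which promotes the exponent of $(1-\lambda_i)$ from $-\tfrac{p+n-1}{2}\beta-1$ to $\tfrac{m-n+1}{2}\beta-1$. Your formulation instead carries the $(1-\lambda_i)$-exponent already in the target form together with a $(\det\Sigma)^{-(p+m)\beta/2}$ prefactor and hypergeometric arguments $(1-\lambda;1-\sigma^{-1})$; your application of \eqref{vip0} then generates $(\det\Sigma)^{(p+m)\beta/2}$ instead. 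The two calculations are equivalent presentations of the same identity, differing only by an intermediate change of variables in the hypergeometric argument.
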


Note that    the factor ${\phantom{j}}_1\mathcal{F}^{(2/\beta)}_0$ of the joint density in \eqref{10factor}  makes sense by Proposition \ref{analytic10} and is positive by Proposition \ref{positivity}. Also note that the notation ${\phantom{j}}_1F^{(\beta)}_0$ is used in \cite{de} rather than ${\phantom{j}}_1\mathcal{F}^{(2/\beta)}_0$ here, but they refer to the same function.

\begin{proof} Theorem 1.1 \cite{de} shows the joint density function equals
\begin{align}  P_{n\beta}(\lambda)=\frac{\mathcal{K}_{p+m,n}^{(\beta)}}{\mathcal{K}_{p,n}^{(\beta)}\,\mathcal{K}_{m,n}^{(\beta)}}\prod_{i=1}^{n}\lambda_{i}^{\frac{p-n+1}{2}\beta-1}(1-\lambda_{i})^{-\frac{p+n-1}{2}\beta-1} \ |\Delta_{n}(\lambda)|^{\beta}\nonumber\\
\times  {\phantom{j}}_1\mathcal{F}^{(2/\beta)}_0\big(\frac{p+m}{2}\beta;-\frac{\lambda}{1-\lambda};\sigma\big). \label{depdf}\end{align}
By  \eqref{vip0}  we have
\begin{align}  {\phantom{j}}_1\mathcal{F}^{(2/\beta)}_0\big(\frac{p+m}{2}\beta;-\frac{\lambda}{1-\lambda};\sigma\big)={\phantom{j}}_1\mathcal{F}^{(2/\beta)}_0\big(\frac{p+m}{2}\beta;\frac{\lambda}{1-\lambda};-\sigma\big)\\
=\prod_{i=1}^{n} (1-\lambda_{i})^{\frac{p+m}{2}\beta}{\phantom{j}}_1\mathcal{F}^{(2/\beta)}_0\big(\frac{p+m}{2}\beta; \lambda;1-\sigma\big), \end{align}
with \eqref{depdf} which completes the proof.
\end{proof}

\section{Spectrum singularity}
\label{scaling}
In this section we consider the case of the fixed constant $b$. The starting point is the following Selberg correlation integrals (see, e.g., \cite[Proposition 13.1.2]{forrester})
\begin{align}
% \frac{1}{M_{N}( a', b';1/\alpha)}
 \int_{[0,2\pi)^{N}}|\Delta_{N}(e^{i\theta})|^{2/\alpha}\prod_{j=1}^{N} e^{i\frac{a'-b'}{2} (\theta_{j}-\pi) }|1-e^{i\theta_{j}}|^{a'+b'} \prod_{j=1}^m \phi(s_{j})\,d^{N}\theta\nonumber\\
 =M_{N}( a', b';1/\alpha)\!{\phantom{j}}_2F_1^{(1/\alpha)}(-N,\alpha b';-N+1-\alpha(1+a'); s_{1} ,\ldots,s_{m}).
 \label{sci}
\end{align}

We first give an expression of the expected products of characteristic polynomials in terms of the generalized hypergeometric functions, which will also be used later in the subsequent sections.     \begin{proposition}\label{hgfexpression}
\begin{align}
&K_{b,N}(s;t)% &=\frac{M_{N}( \bar{b}-n, b+n;\beta/2)}{M_{N}( \bar{b}, b;\beta/2)}\,e^{-inN\pi}\prod_{k=1}^n (\overline{t_{k}})^{N} \times\nonumber\\
%& \!{\phantom{j}}_2F_1^{(\beta/2)}(-N,(2/\beta)(b+n);-N+1-(2/\beta)(\bar{b}-n+1); s_{1} ,\ldots,s_{m+n}) \\
 =C_{N} \,\prod_{k=1}^n (\overline{t_{k}})^{N} \times\nonumber\\
  &\!{\phantom{j}}_2F_1^{(\beta/2)}(-N,(2/\beta)(b+n);(2/\beta)(\bar{b}+b+m+n); 1-s_{1} ,\ldots,1-s_{m+n}),
%%(\overline{t_{1}})^{-1},\ldots,(\overline{t_{n}})^{-1})
\end{align}
where $K_{b,N}(s;t)$ is given in \eqref{productmean}, $s_{m+j}= (\overline{t_{j}})^{-1}$ for $j=1,\ldots,n$ and
\begin{align} C_{N}&=\prod_{j=0}^{m-1}
\frac{\Gamma((2/\beta)(1+ \bar{b}+j))\, \Gamma(N+(2/\beta)(1+ \bar{b}+b+n+j))}
{\Gamma((2/\beta)(1+ \bar{b}+b+n+j))\, \Gamma(N+(2/\beta)(1+ \bar{b}+j))}\nonumber\\
&\times \prod_{j=0}^{n-1}
\frac{\Gamma((2/\beta)(1+ b+j))\, \Gamma(N+(2/\beta)(1+ \bar{b}+b+j))}
{\Gamma((2/\beta)(1+ \bar{b}+b+j))\, \Gamma(N+(2/\beta)(1+ b+j))}.\label{constantC}\end{align}
\end{proposition}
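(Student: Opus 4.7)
The strategy is to reduce $K_{b,N}(s;t)$ to a single Selberg correlation integral of the form \eqref{sci} and then apply a Kummer-type transformation to match the stated expression. To eliminate the anti-holomorphic factors, I use $1-\bar{t}e^{-i\theta}=-\bar{t}e^{-i\theta}(1-\bar{t}^{-1}e^{i\theta})$, which yields $\overline{\phi(t_k)}=(-\overline{t_k})^N e^{-i\sum_j\theta_j}\,\phi((\overline{t_k})^{-1})$. With $s_{m+k}=(\overline{t_k})^{-1}$ as in the statement,
$$K_{b,N}(s;t)=\prod_{k=1}^n(-\overline{t_k})^N\,\E\!\left[e^{-in\sum_j\theta_j}\prod_{j=1}^{m+n}\phi(s_j)\right].$$
The phase $e^{-in\theta_j}$ is absorbed into the circular-Jacobi weight via $e^{i(\bar{b}-b)(\theta_j-\pi)/2}e^{-in\theta_j}=e^{-in\pi}\,e^{i((\bar{b}-n)-(b+n))(\theta_j-\pi)/2}$, so the integrand becomes that of the circular Jacobi ensemble with shifted parameters $(a',b')=(\bar{b}-n,b+n)$ (their sum $\bar{b}+b$ is preserved, so $|1-e^{i\theta_j}|^{a'+b'}$ is unchanged). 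The accumulated phase $e^{-inN\pi}=(-1)^{nN}$ cancels the $(-1)^{nN}$ coming from $\prod_k(-\overline{t_k})^N$, leaving the clean prefactor $\prod_k(\overline{t_k})^N$.

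Applying the Selberg correlation integral \eqref{sci} with Jack parameter $\alpha=2/\beta$ and these $(a',b')$ now yields
$$K_{b,N}(s;t)=\prod_k(\overline{t_k})^N\cdot\frac{M_N(\bar{b}-n,b+n;\beta/2)}{M_N(\bar{b},b;\beta/2)}\cdot{}_2F_1^{(\beta/2)}\!\bigl(-N,\tfrac{2}{\beta}(b+n);-N+1-\tfrac{2}{\beta}(1+\bar{b}-n);s_1,\ldots,s_{m+n}\bigr).$$
To convert to arguments $1-s_j$ with third parameter $(2/\beta)(\bar{b}+b+m+n)$, I invoke the Jack analog of the classical Kummer identity ${}_2F_1(-N,b;c;z)=\frac{(c-b)_N}{(c)_N}{}_2F_1(-N,b;b+1-c-N;1-z)$, namely
$${}_2F_1^{(\alpha')}(-N,b;c;x_1,\ldots,x_p)=\frac{[c-b]^{(\alpha')}_{(N^p)}}{[c]^{(\alpha')}_{(N^p)}}\,{}_2F_1^{(\alpha')}\!\bigl(-N,b;b+1-c-N+\tfrac{p-1}{\alpha'};1-x_1,\ldots,1-x_p\bigr),$$
applied with $\alpha'=\beta/2$, $p=m+n$, $b=(2/\beta)(b+n)$, $c=-N+1-(2/\beta)(1+\bar{b}-n)$; direct arithmetic shows the new third parameter equals $(2/\beta)(\bar{b}+b+m+n)$.

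It remains to identify the overall multiplicative constant with $C_N$. The $M_N$-ratio telescopes (swap the order of products, then use $\Gamma(x+n)/\Gamma(x)=(x)_n$) into
$$\frac{M_N(\bar{b}-n,b+n;\beta/2)}{M_N(\bar{b},b;\beta/2)}=\prod_{k=0}^{n-1}\frac{((2/\beta)(1+\bar{b}-n+k))_N}{((2/\beta)(1+b+k))_N}.$$
Using $(-N+1-A)_N=(-1)^N(A)_N$ to unfold the Kummer prefactor and splitting the range $i=1,\ldots,m+n$ into blocks of size $n$ and $m$, that prefactor becomes
$$\prod_{j=0}^{n-1}\frac{((2/\beta)(1+\bar{b}+b+j))_N}{((2/\beta)(1+\bar{b}-n+j))_N}\cdot\prod_{j=0}^{m-1}\frac{((2/\beta)(1+\bar{b}+b+n+j))_N}{((2/\beta)(1+\bar{b}+j))_N}.$$
Multiplying the two contributions the $((2/\beta)(1+\bar{b}-n+k))_N$ terms cancel, and the surviving product coincides with $C_N$ in \eqref{constantC}. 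The main obstacle is the multivariate Kummer identity with its $(p-1)/\alpha'$ shift: for $p=1$ it collapses to the classical relation, but in the Jack setting it requires a separate justification, either via the Kaneko--Yan framework or by equating Jack-polynomial expansions on both sides and invoking the Chu--Vandermonde evaluation ${}_2F_1^{(\alpha')}(-N,b;c;1^{(p)})=[c-b]^{(\alpha')}_{(N^p)}/[c]^{(\alpha')}_{(N^p)}$. Once that identity is granted, the remaining Gamma-function bookkeeping is routine.
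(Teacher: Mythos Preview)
Your proof is correct and follows essentially the same route as the paper's own argument: rewrite $\overline{\phi(t_k)}$ as a phase times $\phi((\overline{t_k})^{-1})$, absorb the extra phase into the weight to shift $(a',b')\to(\bar{b}-n,b+n)$, apply the Selberg correlation integral \eqref{sci}, and then use the Jack-polynomial Kummer transformation (which the paper cites as Proposition~13.1.7 and (13.14) of \cite{forrester}) together with gamma-function bookkeeping to obtain $C_N$. Your explicit display of the Kummer identity and of the Pochhammer cancellations simply spells out what the paper summarizes as ``simple manipulation of the gamma functions.''
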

\begin{proof}
Since  $$\overline{\phi(z)}=(\bar{z})^{N}\phi(1/\bar{z} )\prod_{j=1}^{N}e^{-i(\theta_{j}-\pi)},$$
set $s_{m+j}= (\overline{t_{j}})^{-1}$, by \eqref{sci} with $a'=\bar{b}-n, b'=b+n$ and $2/\alpha=\beta$,  we get
\begin{align*}&K_{b,N}(s;t) =\prod_{k=1}^n (\overline{t_{k}})^{N} \E[\, \prod_{j=1}^{m+n} \phi(s_{j}) \prod_{k=1}^N e^{-in(\theta_{k}-\pi)}\,]=\prod_{k=1}^n (\overline{t_{k}})^{N} \times\nonumber\\
& \frac{M_{N}( \bar{b}-n, b+n;\beta/2)}{M_{N}( \bar{b}, b;\beta/2)}\!{\phantom{j}}_2F_1^{(\beta/2)}(-N,(2/\beta)(b+n);-N+1-(2/\beta)(\bar{b}-n+1); s).\end{align*}

Notice
$$\frac{M_{N}( \bar{b}-n, b+n;\beta/2)}{M_{N}( \bar{b}, b;\beta/2)}=\prod_{j=0}^{n-1}
\frac{\Gamma((2/\beta)(1+ b+j))\, \Gamma(N+(2/\beta)(\bar{b}-j))}
{\Gamma(N+(2/\beta)(1+ b+j))\, \Gamma((2/\beta)(\bar{b}-j))},$$
by Proposition 13.1.7  and the formula (13.14) in \cite[Chap. 13]{forrester},  simple manipulation of the gamma functions gives  the desired result.
\end{proof}

Next, we define a    multivariate   function which appears as a scaling limit near the spectrum singularity.
 \begin{definition} \label{defsingularty} For $\alpha>0$ and $\textrm{Re}\{b\}>-1/2$,  we define a  multivariate   function
\begin{align}\label{ssdef}
  S_{b}^{(\alpha)}(x_1,\ldots,x_{m+n})&=\gamma_{m,n}(b,2/\alpha) \prod_{k=1}^{m+n} e^{-i x_{k}/2 }\, \times\nonumber\\
    &\!{\phantom{j}}_1F_1^{(\alpha )}((b+n)/\alpha;(\bar{b}+b+m+n)/\alpha; ix_{1},\ldots,ix_{m+n})\end{align}
where the constant  \begin{align} \gamma_{m,n}(b,2/\alpha)&=\prod_{j=0}^{m-1}
\frac{\Gamma((1+ \bar{b}+j)/\alpha) }
{\Gamma((1+ \bar{b}+b+n+j)/\alpha) } \prod_{j=0}^{n-1}
\frac{\Gamma((1+ b+j)/\alpha) }
{\Gamma((1+ \bar{b}+b+j)/\alpha)}.\label{universalconstantcp}\end{align}.
\end{definition}

\begin{remark} Assume that  $m=n$ and all $x_j$ are real variables, then $S_{b}^{(\alpha)}(x)$ is a real-valued function, that is
$$S_{b}^{(\alpha)}(x)=\overline{S_{b}^{(\alpha)}(x)}.$$ This follows from
$\gamma_{m,m}(b,2/\alpha) =\gamma_{m,m}(\bar{b},2/\alpha)$ and the Kummer relation (see Proposition 3.2 \cite{yan})
$$\prod_{k=1}^{n} e^{ x_{k}}{\phantom{j}}_1F_1^{(\alpha )}(c-a;c; - x_{1},\ldots,-x_{n})={\phantom{j}}_1F_1^{(\alpha )}(a;c; x_{1},\ldots,x_{n}).$$
\end{remark}
In particular for $\alpha=1$, application of \eqref{hfpq-1} in Proposition \ref{alpha=1} gives    a determinant expression of the function $S_{b}^{(1)}(x)$, that is
\begin{align}
  S_{b}^{(1)}(x_1,\ldots,x_{m+n}) &=\gamma_{m,n}(b,2) \prod_{j=0}^{m+n-1}\frac{j!\,(\bar{b}+b+1)_{j} }{(b-m+1)_{j}}  \nonumber \\
  & \times e^{-i \sum_{k=1}^{m+n}x_{k}/2 }\, \frac{\det[{(ix_{j})^{k-1}} {{\phantom{j}}_{1}f_{1}^{(k-1)}}(ix_{j})]}{\det[(ix_j)^{k-1}]},
\end{align}
where
\be _{1}f_{1}(z):= {_{1}f_{1}}(b-m+1;\bar{b}+b+1;z)=\sum_{k=0}^{\infty}\frac{(b-m+1)_{k}}{(\bar{b}+b+1)_{k} } \frac{z^{k}}{k!}.\ee
  Further, if $m=n=1$, then
   \begin{align}
 & S_{b}^{(1)}(x_1,x_{2})=\frac{\Gamma(\bar{b}+1)\Gamma(b+1)}{\Gamma(\bar{b}+b+1)\Gamma(\bar{b}+b+2)} \,e^{-i  (x_{1}+x_2)/2 } \frac{1}{x_2 -x_1}\, \nonumber \\
  & \times \left(x_{2}\, {_{1}f_{1}}(b;\bar{b}+b+1;ix_1)\,{_{1}f_{1}}(b+1;\bar{b}+b+2;ix_2)-(x_{1}\leftrightarrow x_2)\right),
 \end{align}
 which is a new kernel and reduces to the sine kernel %$\frac{\sin((x_2-x_1)/2)}{(x_2-x_1)/2}$
 when $b=0$.

    Another special case is that  when $m=n=1$ and $b=0$ for general $\alpha$,  $S_{0}^{(\alpha)}(x_1,x_{2})$ can be expressed by the Bessel function
    $(x_2-x_1)^{-\frac{1}{\alpha}+\frac{1}{2}}J_{\frac{1}{\alpha}-\frac{1}{2}}(x_2-x_1)$, see Proposition 5.2 \cite{dl}.

We now state the main results of this section   as follows.
 \begin{theorem}[Limit at zero]\label{limitatzero} Let   $\rho$ be a nonzero constant. Introducing the scaled variables $$s_{j}=e^{i\frac{x_{j}}{\rho N}}, \ j=1,\ldots,m \quad \mbox{and} \quad t_{k}=e^{i\frac{x_{m+k}}{\rho N}}, \  k=1,\ldots,n$$
with real variables  $x_{1}, \ldots, x_{m+n}$, we have \begin{multline}
\lim_{N\rightarrow \infty} N^{-2(mn+bm+\bar{b}n)/\beta}K_{b,N}(s;t)=\\
\prod_{j=1}^m e^{i x_{j}/(2\rho) } \prod_{k=1}^n e^{-i x_{m+k}/(2\rho) }S_{b}^{(\beta/2)}(x_{1}/\rho,\ldots,ix_{m+n}/\rho)
 \label{limitfuctionzero}
%%(\overline{t_{1}})^{-1},\ldots,(\overline{t_{n}})^{-1})
\end{multline}
  where $K_{b,N}(s;t)$ is given in \eqref{productmean}. %\begin{align} \gamma_{m,n}(b,4/\beta)&=\prod_{j=0}^{m-1}
%\!{\phantom{j}}_1F_1^{(\beta/2)}((2/\beta)(b+n);(2/\beta)(\bar{b}+b+m+n); ix_{1}/\rho,\cdots,ix_{m+n}/\rho)
%\frac{\Gamma((2/\beta)(1+ \bar{b}+j)) }
%{\Gamma((2/\beta)(1+ \bar{b}+b+n+j)) } \prod_{j=0}^{n-1}
%\frac{\Gamma((2/\beta)(1+ b+j)) }
%{\Gamma((2/\beta)(1+ \bar{b}+b+j))}.\label{universalconstantcp}\end{align}
\end{theorem}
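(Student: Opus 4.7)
The plan is to start from the closed-form expression given by Proposition~\ref{hgfexpression} and to take the confluent-type limit of the ${}_2F_1^{(\beta/2)}$ term-by-term over partitions, together with a Stirling estimate of the prefactor $C_N$. Writing $s_{m+k}=(\overline{t_k})^{-1}=e^{ix_{m+k}/(\rho N)}$, so that $1-s_j=-ix_j/(\rho N)+O(1/N^2)$ for every $j=1,\dots,m+n$, the argument of the hypergeometric series shrinks to zero like $1/N$ while its first upper parameter grows like $-N$. This is exactly the regime where the Jack-polynomial confluent limit ${}_2F_1^{(\alpha)}(-N,a;c;z/N)\to {}_1F_1^{(\alpha)}(a;c;-z)$ applies.

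\textbf{Step 1: Asymptotics of the prefactor.} I would first use the standard ratio $\Gamma(N+u)/\Gamma(N+v)\sim N^{u-v}$ on each of the two products defining $C_N$ in \eqref{constantC}. The ratio in the first product contributes $N^{(2/\beta)(b+n)}$ for each $j=0,\dots,m-1$, giving $N^{2(bm+mn)/\beta}$; the ratio in the second product contributes $N^{2\bar{b}/\beta}$ for each $j=0,\dots,n-1$, giving $N^{2\bar{b}n/\beta}$. Collecting, $N^{-2(mn+bm+\bar b n)/\beta}C_N\to \gamma_{m,n}(b,2/\alpha)\big|_{\alpha=\beta/2}$, matching the constant in Definition~\ref{defsingularty}.

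\textbf{Step 2: Termwise limit of the hypergeometric series.} For a fixed partition $\kappa$, the $\alpha$-Pochhammer $[-N]_\kappa^{(\alpha)}=\prod_{i=1}^{\ell(\kappa)}(-N-(i-1)/\alpha)_{\kappa_i}$ is asymptotic to $(-N)^{|\kappa|}$, and the Jack polynomial $P_\kappa^{(\alpha)}$ is homogeneous of degree $|\kappa|$, so
\[
[-N]_\kappa^{(\alpha)}\,P_\kappa^{(\alpha)}\!\Big(\tfrac{-ix_1}{\rho N},\dots,\tfrac{-ix_{m+n}}{\rho N}\Big)\;\longrightarrow\; P_\kappa^{(\alpha)}\!\Big(\tfrac{ix_1}{\rho},\dots,\tfrac{ix_{m+n}}{\rho}\Big).
\]
Combined with the leftover factor $[a]_\kappa^{(\alpha)}/([c]_\kappa^{(\alpha)} h_\kappa^{(\alpha)})$ with $a=(b+n)/(\beta/2)$ and $c=(\bar b+b+m+n)/(\beta/2)$, this yields the desired ${}_1F_1^{(\beta/2)}$ appearing inside $S_b^{(\beta/2)}$. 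The remaining unimodular factor $\prod_{k=1}^n\overline{t_k}^{\,N}=\prod_{k=1}^n e^{-ix_{m+k}/\rho}$ is precisely what is needed so that, after absorbing the $\prod_j e^{\pm ix_j/(2\rho)}$ coming from the factored exponential in $S_b^{(\beta/2)}$, both sides agree.

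\textbf{Step 3: Exchange of limit and sum -- the main obstacle.} The principal technical point is to justify passing the limit inside the infinite sum over partitions. Because ${}_2F_1^{(\beta/2)}$ is a $p=q+1$ series, it only converges in a neighborhood of the origin; however, since each argument $1-s_j$ tends to $0$, we may restrict attention for all sufficiently large $N$ to a disk where absolute convergence holds uniformly. A dominated-convergence argument then applies once we produce a uniform partition-wise bound: using $|[-N]_\kappa^{(\alpha)}|\le C_\kappa N^{|\kappa|}$ with $C_\kappa$ independent of $N$, together with the estimate $|P_\kappa^{(\alpha)}(-ix/(\rho N))|\le N^{-|\kappa|}P_\kappa^{(\alpha)}(|x|/\rho)$ valid because Jack polynomials have nonnegative coefficients on the power-sum basis needed here (cf.\ Theorem~1.1 of \cite{ks97}, invoked already in Proposition~\ref{positivity}), the general term is bounded by the corresponding term of ${}_1F_1^{(\beta/2)}(|a|;\cdot;|x|/\rho)$, which is summable. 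Once the exchange is justified, collecting Steps~1 and 2 and simplifying the exponential prefactors as above yields \eqref{limitfuctionzero}.
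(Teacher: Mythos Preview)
Your approach is essentially the same as the paper's: start from Proposition~\ref{hgfexpression}, use Stirling on $C_N$, and take the confluent limit ${}_2F_1^{(\alpha)}(-N,a;c;\,\cdot/N)\to{}_1F_1^{(\alpha)}(a;c;\,\cdot\,)$. The paper simply invokes this last limit as a known identity (citing (13.5) of \cite{forrester}), whereas you unpack it termwise and try to justify the exchange of limit and sum; this is a reasonable elaboration rather than a different route.

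Two small technical slips in Step~3 are worth cleaning up. First, the Knop--Sahi positivity you cite concerns the \emph{monomial} basis, not the power-sum basis; the inequality $|P_\kappa^{(\alpha)}(z)|\le P_\kappa^{(\alpha)}(|z|)$ you need does follow from monomial positivity, so the argument survives, but the attribution should be corrected. Second, a bound $|[-N]^{(\alpha)}_\kappa|\le C_\kappa N^{|\kappa|}$ with ``$C_\kappa$ independent of $N$'' is not by itself enough for dominated convergence: you need a majorant summable over $\kappa$. In fact, since $[-N]^{(\alpha)}_\kappa=0$ once $\kappa_1>N$, only partitions with $\kappa_1\le N$ and $\ell(\kappa)\le m+n$ contribute, and for those one has $|[-N]^{(\alpha)}_\kappa|\le (N+(m+n-1)/\alpha)^{|\kappa|}$ while $|1-s_j|\le |x_j|/(|\rho|N)$, so the combined factor is bounded by $(1+C/N)^{|\kappa|}\le e^{C(m+n)}$ uniformly. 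With this uniform constant the dominating series is a genuine ${}_1F_1^{(\alpha)}$ evaluated at $|x|/|\rho|$, which converges absolutely, and the interchange is justified.
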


Note that when $b=0$ ${\phantom{j}}_0 \mathcal{F}_{0}^{(\beta/2)}(x_1,\ldots,x_{m+n};1^{(m)}, (-1)^{(n)})$ appears as a scaling limit in
the non-trivial part $S_{b}^{(\beta/2)}(x)$    on the RHS of \eqref{limitfuctionzero} for any integers $m$ and $n$,  after  application of \eqref{vip2}.
\begin{proof}
Notice the asymptotic formula of the gamma functions $$\frac{\Gamma(N+b)}{\Gamma(N+a)}\sim N^{b-a} \ \mathrm{as}\  N\rightarrow \infty,$$
 together with the following limit property (cf. (13.5), \cite[Chap. 13]{forrester})
 $$\lim_{b'\rightarrow \infty}{\phantom{j}}_2F_1^{(\alpha)}(a', b';c'; s_{1}/b',\ldots,s_{m}/b')={\phantom{j}}_1F_1^{(\alpha)}(a';c'; s_{1},\ldots,s_{m}),$$ application of Proposition \ref{hgfexpression} gives the desired result.
\end{proof}

When $\beta$ is an even integer, the scaling limit of correlation functions immediately follows from Theorem \ref{limitatzero}.
 \begin{coro}[Correlations at zero for even $\beta$]\label{cflimitatzero} Let   $\rho$ be a nonzero constant. Introducing the scaled variables $$r_{j}= \frac{y_{j}}{\rho N} ,\quad    j=1,\ldots,k$$
with real variables  $y_{1}, \ldots, y_{k}$ in \eqref{defpcf}, we have \be
\lim_{N\rightarrow \infty} \big(\frac{1}{\rho N}\big)^{k} R^{(k)}_{b,N}\big(\frac{y_{1}}{\rho N},\ldots,\frac{y_{k}}{\rho N}\big)
=R^{(k,\beta/2)}_{b,\rho}(y_1,\ldots,y_k),\nonumber\ee
where
\begin{multline}
R^{(k,\beta/2)}_{b,\rho}(y_1,\ldots,y_k)=c_{k}(b,\beta) \,(2\pi\rho)^{-k}e^{i \frac{b-\bar{b}}{2}k\pi }\prod_{j=1}^k\big(| y_j/\rho|^{\bar{b}+b} e^{i \frac{\beta}{2\rho}y_{j}}\big) \ |\Delta_{k}(y/\rho)|^{\beta} \, \times\\
\!{\phantom{j}}_1F_1^{(\beta/2)}((2/\beta)b+k;(2/\beta)(\bar{b}+b)+2k; -iy_{1}/\rho,\cdots,-iy_{1}/\rho, \ldots, -iy_{k}/\rho).\label{bcorrelation}
\end{multline}
Here in the argument of ${\phantom{j}}_1F_1^{(\beta/2)}$ each $-iy_{j}/\rho$ ($j=1,\ldots,k$) occurs $\beta$ times, and \begin{multline} c_{k}(b,\beta)=(\beta/2)^{(\bar{b}+b)k+\beta k(k-1)/2}\big(\Gamma(1+\beta/2)\big)^{k} \times\\
\prod_{j=0}^{2k-1}\frac{1}{\Gamma(1+ \bar{b}+b+\beta j/2)}\  \prod_{j=0}^{k-1}\Gamma( 1+ \bar{b}+\beta j/2) \Gamma( 1+ b+\beta j/2).\label{universalconstantcfbulk}\end{multline}
\end{coro}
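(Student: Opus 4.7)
The plan is to reduce the corollary to Theorem \ref{limitatzero} by means of the exact identity \eqref{cptopcf}, and then handle each of the remaining auxiliary factors by elementary asymptotic expansions. Specifically, \eqref{cptopcf} writes $R^{(k)}_{b,N}(r_1,\ldots,r_k)$ as the product of a combinatorial factor $(k+N)!/N!$, the ratio of Selberg normalizations $M_{N}(\bar{b},b;\beta/2)/M_{k+N}(\bar{b},b;\beta/2)$, a Vandermonde $|\Delta_k(e^{ir})|^\beta$, the phase-modulus factor $\prod_j e^{i\frac{\bar{b}-b}{2}(r_j-\pi)}|1-e^{ir_j}|^{\bar{b}+b}$, and the expectation $K_{b,N}(s;s)$ with $s_{l+\beta(j-1)/2}=e^{-ir_j}$. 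Under the substitution $r_j=y_j/(\rho N)$, this corresponds precisely to the scaled variables of Theorem \ref{limitatzero} with $m=n=\beta k/2$ and $x_{l+\beta(j-1)/2}=-y_j$ on both the $s$- and $t$-sides, so the theorem gives the asymptotic of $K_{b,N}$ directly.

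Next I would compute the asymptotics of the auxiliary factors. Near $r_j=0$, Taylor expansion yields $|e^{ir_j}-e^{ir_l}|\sim |r_j-r_l|$ and $|1-e^{ir_j}|\sim |r_j|$, producing
\[
|\Delta_k(e^{ir})|^\beta \sim N^{-\beta k(k-1)/2}|\Delta_k(y/\rho)|^\beta,\qquad \prod_{j=1}^k|1-e^{ir_j}|^{\bar{b}+b}\sim N^{-(\bar{b}+b)k}\prod_{j=1}^k|y_j/\rho|^{\bar{b}+b},
\]
while $\prod_j e^{i\frac{\bar{b}-b}{2}(r_j-\pi)}\to e^{i(b-\bar b)k\pi/2}$ and $(k+N)!/N!\sim N^k$. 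For the constant ratio, I would apply the elementary Gamma asymptotic $\Gamma(z+a)/\Gamma(z)\sim z^a$ to each of the $k$ new factors in
\[
\frac{M_{k+N}}{M_N}=(2\pi)^k\prod_{j=N}^{k+N-1}\frac{\Gamma(1+(j+1)\beta/2)\,\Gamma(1+\bar b+b+j\beta/2)}{\Gamma(1+\beta/2)\,\Gamma(1+\bar b+j\beta/2)\,\Gamma(1+b+j\beta/2)},
\]
which telescopes at leading order to give $M_N/M_{k+N}\sim (2\pi)^{-k}(N\beta/2)^{-k\beta/2}\Gamma(1+\beta/2)^k$.

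The assembly is then a bookkeeping check. Collecting $N$-exponents from all six factors together with the $(1/(\rho N))^k$ normalization:
\[
-k+k-\tfrac{\beta k}{2}-\tfrac{\beta k(k-1)}{2}-(\bar b+b)k+\tfrac{\beta k^2}{2}+(\bar b+b)k=0,
\]
so a finite limit indeed exists. The phase prefactors $\prod_j e^{ix_j/(2\rho)}\prod_k e^{-ix_{m+k}/(2\rho)}$ inherited from Theorem \ref{limitatzero} cancel because $x$ on the $s$-side and the $t$-side are identical with opposite signs in the exponent; the surviving phase $\prod_{l=1}^k e^{i\beta y_l/(2\rho)}$ comes from $\prod_k e^{-ix_k/2}$ inside $S_b^{(\beta/2)}$ (each $-y_l/\rho$ appearing $\beta$ times), and combined with $e^{i(b-\bar b)k\pi/2}$ reproduces the explicit phases in \eqref{bcorrelation}. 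The $\,_1F_1^{(\beta/2)}$ with arguments $(-iy/\rho)^{\oplus\beta}$ likewise falls out directly from substituting $\alpha=\beta/2$, $(b+n)/\alpha=(2/\beta)b+k$, and $(\bar b+b+m+n)/\alpha=(2/\beta)(\bar b+b)+2k$.

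The main obstacle is the final identification of the multiplicative constant. What the combination of $\gamma_{m,n}(b,4/\beta)$ (from $S_b^{(\beta/2)}$), the limit $(\beta/2)^{-\beta k/2}\Gamma(1+\beta/2)^k$ (from $M_N/M_{k+N}$), and the $\rho$-powers naturally produces is an expression involving $\Gamma((2/\beta)(1+\bar b+j))$ and $\Gamma((2/\beta)(1+\bar b+b+n+j))$ for $j=0,\ldots,\beta k/2-1$, whereas the stated $c_k(b,\beta)$ is written in terms of $\Gamma(1+\bar b+\beta j/2)$ etc.\ for $j$ in a shorter range. Because the corollary assumes $\beta$ is an even integer, $\beta/2\in\mathbb N$, so the Gauss multiplication formula
$\prod_{i=0}^{n-1}\Gamma(z+i/n)=(2\pi)^{(n-1)/2}n^{1/2-nz}\Gamma(nz)$ with $n=\beta/2$ applies to each group of $\beta/2$ consecutive factors in $\gamma_{\beta k/2,\beta k/2}(b,4/\beta)$. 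A careful regrouping—coupled with the $(\beta/2)^{(\bar b+b)k+\beta k(k-1)/2}$ from accumulating the $n^{1/2-nz}$ prefactors—converts the former expression into the latter, yielding exactly $c_k(b,\beta)$ as defined in \eqref{universalconstantcfbulk}. This Gamma-function juggling is the only step requiring genuine care.
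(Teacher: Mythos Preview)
Your proposal is correct and follows essentially the same route as the paper: reduce the correlation function to $K_{b,N}(s;s)$ via \eqref{cptopcf} with $m=n=\beta k/2$ and $x_{l+\beta(j-1)/2}=-y_j$, invoke Theorem \ref{limitatzero}, expand the auxiliary factors (Vandermonde, modulus, phase, $M_N/M_{k+N}$) to verify the $N$-powers cancel, and then apply Gauss's multiplication formula with multiplicity $\beta/2$ to convert $\gamma_{\beta k/2,\beta k/2}(b,4/\beta)$ into the stated constant $c_k(b,\beta)$. The paper's proof is identical in structure and detail.
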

\begin{proof}Let $m=\beta k/2$. Introduce $s_j=e^{i \frac{x_j}{\rho N}}, \ j=1,\ldots,m$ and
$$x_{l+ \beta(j-1)/2}=-y_j, \quad j=1,\ldots,k, l=1,\ldots, \beta/2,$$
by  use of the formula \eqref{variablescptopcf} simple manipulation gives  as $N\rightarrow \infty$
\begin{multline}
 \big(\frac{1}{\rho N}\big)^{k} R^{(k)}_{b,N}\big(\frac{y_{1}}{\rho N},\ldots,\frac{y_{k}}{\rho N}\big)\sim \frac{ M_{N}(\bar{b},b;\beta/2)}{M_{k+N}(\bar{b},b;\beta/2)} \rho^{-k}\times\\
  N^{-(\bar{b}+b)k-\beta k(k-1)/2} \, e^{i \frac{b-\bar{b}}{2}k\pi }\prod_{j=1}^k | y_j/\rho|^{\bar{b}+b}  \ |\Delta_{k}(y/\rho)|^{\beta} \,
K_{b,N}(s;s).\label{cf1step}
\end{multline}

Notice the notation $M_N$ specified by \eqref{Mconstant} and one easily derives $$\frac{ M_{N}(\bar{b},b;\beta/2)}{M_{k+N}(\bar{b},b;\beta/2)}\sim (2\pi)^{-k}\big(\Gamma(1+\beta/2)\big)^{k} (\beta N/2)^{-\beta k/2}.$$
On the other hand, for Gauss's multiplication formulas of the gamma functions
\be \label{gaussmul}\prod_{j=0}^{l-1} \Gamma (a+  \frac{j}{l} )=l^{-la+\frac{1}{2}}(2\pi)^{\frac{l-1}{2}} \Gamma(la), \qquad l\in \mathbb{N}, \ee
let $l=\beta/2$ and  we get (cf. \eqref{universalconstantcp})
\begin{align} \gamma_{m,m}(b, 4/\beta)&=
\frac{ \prod_{p=0}^{k-1}\prod_{j=0}^{-1+\beta/2}\Gamma( \frac{1+ \bar{b}}{\beta/2}+p+\frac{j}{\beta/2} )\,\Gamma( \frac{1+ b}{\beta/2}+p+\frac{j}{\beta/2} ) }
{ \prod_{p=0}^{2k-1}\prod_{j=0}^{\frac{\beta}{2}-1}\Gamma( \frac{1+ \bar{b}+b}{\beta/2}+p+\frac{j}{\beta/2} ) } \nonumber\\
&=
\frac{\prod_{p=0}^{k-1} \Gamma(1+ \bar{b} + \beta p/2)\, \Gamma(1+ b + \beta p/2) }
{\prod_{p=0}^{2k-1} \Gamma(1+ \bar{b} +b+ \beta p/2) }.  \end{align}

Together with \eqref{cf1step}, the desired result immediately follows from Theorem \ref{limitatzero}.
\end{proof}

We remark that in the  circular $\beta$-ensemble, i.e., $b=0$, Corollary \ref{cflimitatzero} is actually identical to the bulk of the Gaussian, Laguerre and Jacobi ensembles \cite{dl}, and has been proved
by Forrester \cite{forrester92} (see also Proposition 13.2.3 of \cite{forrester}). For  $\beta=1, 2, 4$ and fixed real $b$, the scaling limits for
point correlation functions near the spectrum singularity (at zero) were proved by Forrester and Nagao \cite{fn01}, see also
Chapter 7.2.6, \cite{forrester} for the limiting kernel and relationship with the sine kernel in the $\beta=2$ case. For  $\beta=2$ and fixed real $b$, the
same universal correlation functions as in  Corollary \ref{cflimitatzero} appear in a  wide  class of unitary ensembles with singularity at the origin, see
\cite{admn97,admn98,kv,ns} and references therein.

\begin{remark}  Theorem \ref{limitatzero} and Corollary \ref{cflimitatzero} just involve the local limit at the angle $\theta=0$   of the unit circle.
However, when $b=0$, the local limit still holds true near every angle of the unit circle because of the relation  $K_{0,N}(e^{i\theta}s;e^{i\theta}t)=K_{0,N}(s; t)$ for any real $\theta$. But for the general fixed $b$, our method may not  work at the other angle than zero.

\end{remark}

\section{Limits for $b=\beta Nd/2$}
We  devote ourselves to the case of $b=\beta Nd/2$  % with $d>0$
 and prove the local limits in the bulk and at the soft edge. In this section let $\alpha=\beta/2$,  for convenience, $\alpha$ and $\beta$   will be used in alternation.
 \subsection{General procedure}\label{general}
   Using Proposition \ref{hgfexpression} with the same notation $s_{m+j}= (\overline{t_{j}})^{-1}$ ($j=1,\ldots,n$), if $\textrm{Re}\{b\}>\max\{m,n\}-1$, we then have the following integral  representation due to Yan \cite{yan} \begin{align}
K_{b,N}&(s;t)
 =C_{N} \,\prod_{k=1}^n (\overline{t_{k}})^{N} \frac{1}{S_{m+n}(\nu_1,\nu_2;1/\alpha)}\, \times\nonumber\\
&
\int_{[0,1]^{m+n}} {\!\!\!{\phantom{j}}_1\mathcal{F}_{0}^{(\alpha)}(-N;1-s;y)}\prod_{j=1}^{m+n}y_{j}^{\nu_1}(1-y_{j})^{\nu_2}\,|\Delta_{m+n}(y)|^{2/\alpha}\,d^{m+n}y,
\end{align}
where $\nu_{1}= -1+(b-m+1)/\alpha, \, \nu_{2}= -1+(\bar{b}-n+1)/\alpha$ and
 \be S_{m+n}(\nu_1,\nu_2;1/\alpha)=\prod_{j=0}^{m+n-1}
\frac{\Gamma(1+ (1+j)/\alpha) \Gamma(1+\nu_{1}+j/\alpha) \Gamma(1+\nu_{2}+j/\alpha)}
{\Gamma(1+1/\alpha)\Gamma(2+\nu_{1}+\nu_{2}+(m+n+j-1)/\alpha)}.\label{selbergc}\ee

Introduce the scaled variables $$s_{j}=e^{i\theta+i\frac{x_{j}}{\rho N}}, \ j=1,\ldots,m+n,$$
where $\theta$ is  a spectral parameter  that  allows  us to select the part of  the spectrum we are going to study, and $\rho$ is to be determined. For convenience, let the new spectral parameter \be u= \frac{1}{1-e^{i\theta}},\ee then $1-s_j=\frac{1}{u}+w_{j}$ where $w_j=e^{i\theta}(1-e^{i\frac{x_{j}}{\rho N}})$. We also assume $b=\alpha N d$ where $\textrm{Re}\{d\}>0$. Application of \eqref{vip0} gives
\begin{align}
K_{b,N}&(s;t)
 =\frac{C_{N} e^{-inN\theta}}{S_{m+n}(\nu_1,\nu_2;1/\alpha)} \,\prod_{k=1}^n e^{-ix_{m+k}/\rho} \, \times\nonumber\\
&
\int_{[0,1]^{m+n}} \exp\{-N\sum_{j=1}^{m+n}p(y_j)\} \,|\Delta_{m+n}(y)|^{2/\alpha} \tilde{Q}(y)\,d^{m+n}y,
\end{align}
where
\be p(y_j)=-d \log y_j\ -\bar{d}\log(1-y_j)-\log(1-u^{-1}y_j),\ee
and
\be \tilde{Q}(y)=\prod_{j=1}^{m+n}y_{j}^{ \frac{1-m}{\alpha}-1}(1-y_{j})^{ \frac{1-n}{\alpha}-1 }{\!\!\!{\phantom{j}}_1\mathcal{F}_{0}^{(\alpha)}(-N;w;\frac{y}{1-u^{-1}y})}.\ee

Notice as $N\rightarrow \infty$ one has $w_j\sim -ie^{i\theta} x_{j}/(\rho N)$, and
\be \tilde{Q}(y)=\prod_{j=1}^{m+n}y_{j}^{ \frac{1-m}{\alpha}-1}(1-y_{j})^{ \frac{1-n}{\alpha}-1 }  \Big({\!\!\!{\phantom{j}}_0\mathcal{F}_{0}^{(\alpha)}(x/\rho;\frac{ie^{i\theta} y}{1-u^{-1}y})}+O(\frac{1}{N})\Big), \nonumber\ee
  which shows
  \begin{align}\label{eq1}
K_{b,N}&(s;t)
\sim \frac{C_{N} e^{-inN\theta}}{S_{m+n}(\nu_1,\nu_2;1/\alpha)} \,\prod_{k=1}^n e^{-ix_{m+k}/\rho} \, I_{N}.
\end{align}
Here
\be \label{eq2} I_{N}=\int_{[0,1]^{m+n}} \exp\{-N\sum_{j=1}^{m+n}p(y_j)\} \,|\Delta_{m+n}(y)|^{2/\alpha} Q(y)\,d^{m+n}y
 \ee
and
  \be \label{eq3} Q(y)=\prod_{j=1}^{m+n}y_{j}^{ \frac{1-m}{\alpha}-1}(1-y_{j})^{ \frac{1-n}{\alpha}-1 }  {\!\!{\phantom{j}}_0\mathcal{F}_{0}^{(\alpha)}(x/\rho;\frac{ie^{i\theta}y}{1-u^{-1}y})}.\ee

From now on we suppose that $d$ is a positive real number,   we then have
\be \label{pfunction} p(y_j)=-d \log y_j\ -d \log(1-y_j)-\log(1-u^{-1}y_j).\ee
However, our method may be applicable  to the general case of $\textrm{Re}\{d\}>0$, except that some much more complicated computation must be done.

 We will make use of Corollaries (3.11) and (3.12)  of \cite{dl},  which are based on the steepest descent method for Selberg-type integrals,  to asymptotically evaluate $I_{N}$ as $N\to \infty$.
 %%We recall  that according to the latter method, when considering a single integration over a complex variable $z$, one first finds complex numbers $z_0$ satisfying $p'(z_0)=0$.  If $p^{(j)}(z_0)=0$ for all $j=1,\ldots, \mu-1$ but $p^{(\mu)}(z_0)\neq 0$, then $z_0$ is a saddle point of degree $\mu-1$.  In a second time, one checks if the original path of integration   can be deformed into the path  of steepest descent, which must pass through the saddle point  $z_0$ and be such that the phase of $ \{(z-z_0)^\mu p^{(\mu)}(z_0)\}$ is zero. Actually, it is sufficient to deform the path such that  $ \mathrm{Re}\{(z-z_0)^\mu p^{(\mu)}(z_0)\}>0$ for $z\neq z_0$.
  In our case, since $$p'(x)=-\frac{d}{x}+\frac{d}{1-x}+\frac{1}{u-x},$$ there are at most two saddle points $x_\pm$, which satisfy \be  \label{twosaddle00} x_\pm=\frac{1}{2}+\frac{1}{2(1+\sin\frac{\theta_\textrm{d}}{2})\sin\frac{\theta}{2}}\Big(\pm\sqrt{\sin^{2}\frac{\theta}{2}-\sin^{2}\frac{\theta_\textrm{d}}{2}}+i\sin\frac{\theta_\textrm{d}}{2}\cos\frac{\theta}{2}\Big) .\ee
 Recall that   $\theta_\textrm{\textrm{d}}\in [0,\pi)$ and is defined by
\be \label{sind}\sin\frac{\theta_\textrm{d}}{2}= \frac{\textrm{d}}{1+\textrm{d}} .\ee
 Also let  $\varphi \in [-\frac{\pi}{2},\frac{\pi}{2}]$ be such that
\be \cos\varphi=\frac{\sqrt{\sin^{2}\frac{\theta}{2}-\sin^{2}\frac{\theta_\textrm{d}}{2}}}{
 \cos\frac{\theta_\textrm{d}}{2} \sin\frac{\theta}{2} }, \qquad \sin\varphi=\frac{ \sin\frac{\theta_\textrm{d}}{2}\cos\frac{\theta}{2}}{
 \cos\frac{\theta_\textrm{d}}{2} \sin\frac{\theta}{2} },\ee
 then we have \be \label{twosaddles}x_+=\frac{1}{2}+\frac{\cos\tfrac{\theta_\textrm{d}}{2}}{2(1+\sin\tfrac{\theta_\textrm{d}}{2})}e^{i\varphi}, \qquad x_-=\frac{1}{2}+\frac{\cos\tfrac{\theta_\textrm{d}}{2}}{2(1+\sin\tfrac{\theta_\textrm{d}}{2})}e^{i(\pi-\varphi)}.\ee
 The nature of the saddle points depends on the value of $\theta$, and we distinguish two cases:
\begin{enumerate}
\item  Two complex saddle points of degree one $x_+, x_-$ given in \eqref{twosaddles} when $\theta\in (\theta_\textrm{d}, 2\pi-\theta_\textrm{d})$, i.e. $\varphi \in (-\frac{\pi}{2},\frac{\pi}{2})$.
\item  One   saddle point of degree two \be \label{one+} x_0=\frac{1}{2}+\frac{i\cos\frac{\theta_\textrm{d}}{2}}{2(1+\sin\frac{\theta_\textrm{d}}{2})}\  \mathrm{for} \ \theta=\theta_\textrm{d}\ee
    while \be \label{one-} x_0=\frac{1}{2}-\frac{i\cos\frac{\theta_\textrm{d}}{2}}{2(1+\sin\frac{\theta_\textrm{d}}{2})}\  \mathrm{for} \ \theta=2\pi-\theta_\textrm{d}.\ee
\end{enumerate}

In the following  subsections, we proceed to compute the leading asymptotic terms for products of characteristic polynomials  by use of  Corollaries 3.11 and 3.12  in \cite{dl}, respectively in the bulk and at  the soft edge of the spectrum.

\subsection{Bulk limits}
 \begin{theorem}[Bulk limit]\label{bulklimits} Let   $$\rho=\frac{1}{2\pi}\frac{\sqrt{\sin^{2}(\theta/2)-\sin^{2}(\theta_\textrm{d}/2)}}{(1-\sin(\theta_\textrm{d}/2))\sin(\theta/2)}, \qquad \theta_\textrm{d}<\theta<2\pi-\theta_\textrm{d},$$   and introduce the scaled variables $$s_{j}=e^{i\theta+i\frac{x_{j}}{\rho N}}, \ j=1,\ldots,m \quad \mbox{and} \quad t_{k}=e^{i\theta+i\frac{x_{m+k}}{\rho N}}, \  k=1,\ldots,n$$
with real variables  $x_{1}, \ldots, x_{m+n}$. Assume that $m+n=2l$  and $b=\beta Nd/2$ with $d>0$, then as $N\rightarrow \infty$ we have \begin{multline}
   \frac{K_{b,N}(s;t)}{\Psi_{N,m,n}}\sim
\exp\Big\{ \frac{1}{2\rho}
\big[
(i-d \cot\tfrac{\theta}{2}  ) \sum_{j=1}^m   x_{j}
 -(i+d \cot\tfrac{\theta}{2}  ) \sum_{j=1}^n   x_{m+j}
\big]
%\frac{\pi}{\sqrt{\sin^{2}\tfrac{\theta}{2}-\sin^{2}\tfrac{\theta_\textrm{d}}{2}}} \\
%\big[
%\big(i\sin\tfrac{\theta}{2}(1-\sin\tfrac{\theta_{\textrm{d}}}{2})-\cos\tfrac{\theta}{2} \sin\tfrac{\theta_{\textrm{d}}}{2}\big) \sum_{j=1}^m   x_{j}
%+\big(-i\sin\tfrac{\theta}{2}(1-\sin\tfrac{\theta_{\textrm{d}}}{2})-\cos\tfrac{\theta}{2} \sin\tfrac{\theta_{\textrm{d}}}{2}\big) \sum_{j=1}^n   x_{m+j}
%\big]
\Big\}  \\ \, \times\gamma_{l}(4/\beta) \  e^{-i\pi \sum_{k=1}^{2l} x_{k}}\,
\!{\phantom{j}}_1F_1^{(\beta/2)}(2l/\beta; 4l/\beta ; 2i\pi x)
%%(\overline{t_{1}})^{-1},\ldots,(\overline{t_{n}})^{-1})
\end{multline}
where $K_{b,N}(s;t)$ is defined in \eqref{productmean}, $\gamma_{l}(4/\beta):=\gamma_{l,l}(0,4/\beta)$ is given in \eqref{universalconstantcp} and \begin{align}\Psi_{N,m,n}=(2\pi \rho)^{\tfrac{2l(l+1)}{\beta}-l}e^{i\tfrac{m-n}{2}N \theta+i\tfrac{m-n}{\beta}l (\theta-\pi)}(2\sin\tfrac{\theta}{2})^{-2ldN}N^{\tfrac{2l^{2}}{\beta}}d^{\tfrac{(m-n)^{2}}{2\beta}}\nonumber\\
\times (1+d)^{-2l(1+d)N-\tfrac{m(m+1)+n(n+1)}{\beta}+l}(1+2d)^{l(1+2d)N+\tfrac{l(l+1)}{\beta}-\tfrac{l}{2}}.\end{align}
\end{theorem}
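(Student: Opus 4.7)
The plan is to apply the steepest descent analysis to the integral representation $I_N$ from equations (\ref{eq2})--(\ref{eq3}) of the general procedure. In the bulk regime $\theta\in(\theta_{\textrm{d}}, 2\pi-\theta_{\textrm{d}})$, the phase function $p(y)$ in (\ref{pfunction}) has two simple complex saddle points $x_{\pm}$ given by (\ref{twosaddles}). Since the total number of variables is $m+n=2l$, the natural ansatz is to let exactly $l$ integration variables concentrate at each saddle; this balanced split arises from the fact that $|x_+-x_-|$ is order one while $p''(x_+)$ and $p''(x_-)$ are complex conjugates of each other, so the Gaussian contributions from the two saddles are equally weighted in modulus.

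The main computational engine will be Corollary 3.11 of \cite{dl}, which is precisely the multivariate steepest descent statement for Selberg-type integrals with two simple saddles. Applying it to $I_N$ yields a leading-order expression of the schematic form
\begin{equation*}
I_N \sim C_{l,\alpha}\, e^{-Nl\bigl(p(x_+)+p(x_-)\bigr)}\,\bigl(-Np''(x_+)\bigr)^{-\mu}\bigl(-Np''(x_-)\bigr)^{-\mu}\,|x_+-x_-|^{2l^2/\alpha}\,\mathcal{Q}(x_+,x_-),
\end{equation*}
where $\mu=l/2+l(l-1)/(2\alpha)$, $C_{l,\alpha}$ is a combinatorial constant counting partitions of the variables, and $\mathcal{Q}(x_+,x_-)$ is the value of $Q(y)$ when $l$ arguments equal $x_+$ and the other $l$ equal $x_-$. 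The choice of scale $\rho$ in the theorem statement matches $|p''(x_\pm)|^{1/2}/(2\pi)$ at the bulk, which is exactly what is needed so that the rescaled variables $x_j/(\rho N)$ enter the final hypergeometric function with argument of order one.

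The crucial simplification comes from the factor $\mathcal{Q}(x_+,x_-)$, which contains ${\phantom{j}}_0\mathcal{F}_0^{(\alpha)}(x/\rho;\, a^{(l)},b^{(l)})$ with $a$ and $b$ the two saddle values $ie^{i\theta}x_\pm/(1-u^{-1}x_\pm)$. Applying the invariance formula (\ref{vip2}) converts this precisely into a ${\phantom{j}}_1F_1^{(\alpha)}(l/\alpha;\, 2l/\alpha;\cdot)$, with argument proportional to $(a-b)x_j/\rho$; a direct computation from (\ref{twosaddles}) should show $a-b=2\pi i\rho$ up to the expected linear phase correction, producing exactly the argument $2i\pi x$ appearing in the claim. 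The remaining single-variable factors combine with the $\exp\{-Np(x_\pm)\}$ prefactors to produce the linear phase $\exp\{\tfrac{1}{2\rho}[(i-d\cot\tfrac{\theta}{2})\sum x_j-(i+d\cot\tfrac{\theta}{2})\sum x_{m+j}]\}$.

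Finally, one combines with the ratios $C_N/S_{m+n}(\nu_1,\nu_2;1/\alpha)$ appearing in (\ref{eq1}) and applies Stirling to the gamma function ratios. The main obstacle will be bookkeeping: verifying by explicit algebraic manipulation that $p(x_+)+p(x_-)$ and $p''(x_\pm)$ combine with the Selberg normalization $S_{m+n}$ and $C_N$ to give exactly the announced $\Psi_{N,m,n}$, including the intricate powers of $d$, $(1+d)$, and $(1+2d)$. The identification of $\gamma_{l}(4/\beta)$ in the prefactor will follow from the Stirling-level analysis of $C_N$ and the constants in (\ref{universalconstantcp}), mirroring the bookkeeping carried out in Corollary \ref{cflimitatzero}. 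The underlying steepest descent and invariance-formula steps are conceptually straightforward; the difficulty is purely in the accurate tracking of phases, absolute values, and non-analytic factors such as $(2\sin(\theta/2))^{-2ldN}$ that arise from taking modulus-style limits of oscillatory expressions.
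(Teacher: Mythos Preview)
Your approach is essentially the paper's own: reduce to the integral $I_N$ of \eqref{eq1}--\eqref{eq3}, apply the two-saddle steepest descent result of \cite{dl} with $l$ variables at each saddle, convert the resulting ${\phantom{j}}_0\mathcal{F}_0^{(\alpha)}$ via \eqref{vip2} into ${\phantom{j}}_1F_1^{(\beta/2)}(2l/\beta;4l/\beta;2i\pi x)$, and then combine with the Stirling asymptotics of $C_N/S_{m+n}(\nu_1,\nu_2;1/\alpha)$ to produce $\Psi_{N,m,n}$ and $\gamma_l(4/\beta)$.

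Two small corrections are worth noting. First, the relevant steepest descent statement in \cite{dl} is Corollary~3.12, not Corollary~3.11: in that paper 3.12 handles two simple saddle points (the bulk case here), whereas 3.11 treats a single higher-order saddle and is the tool for the soft edge. Second, the scale $\rho$ is \emph{not} characterized as $|p''(x_\pm)|^{1/2}/(2\pi)$; rather, it is fixed by the requirement you state later, namely that the difference of the effective arguments
\[
\frac{ie^{i\theta}x_+}{1-u^{-1}x_+}-\frac{ie^{i\theta}x_-}{1-u^{-1}x_-}
= \frac{i\sqrt{\sin^{2}\tfrac{\theta}{2}-\sin^{2}\tfrac{\theta_\textrm{d}}{2}}}{\sin\tfrac{\theta}{2}\,(1-\sin\tfrac{\theta_\textrm{d}}{2})}
\]
equal $2\pi i\rho$, so that after dividing by $\rho$ the ${\phantom{j}}_1F_1$ argument becomes exactly $2i\pi x$. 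With these fixes your outline matches the paper's proof.
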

\begin{proof} We now continue the foregoing analysis in Section \ref{general}. According to  Corollary 3.12  in \cite{dl}, we   will  perform some simple calculations as follows.

First, together with \eqref{pfunction} and \eqref{twosaddles}  simple manipulation shows
\begin{align}\label{ptwovalue} p(x_{\pm})=  \mp i(\frac{\pi}{2}+d\varphi)+(1+d)\log\frac{\sqrt{\sin^{2}\frac{\theta}{2}-\sin^{2}\frac{\theta_\textrm{d}}{2}}   \pm i\cos\frac{\theta}{2}}{\cos\frac{\theta_\textrm{d}}{2}}\nonumber\\
-i\frac{\theta}{2}+\log\frac{1+\sin\frac{\theta_\textrm{d}}{2}}{\cos\frac{\theta_\textrm{d}}{2}} +d\log2 +d\log\big(\frac{1+\sin\frac{\theta_\textrm{d}}{2}}{\sin\frac{\theta_\textrm{d}}{2}}\sin\frac{\theta}{2}\big),  \end{align}
and \begin{align} \label{ptwoprime}p&''(x_{\pm})= \frac{4\sqrt{\sin^{2}\frac{\theta}{2}-\sin^{2}\frac{\theta_\textrm{d}}{2}}}{\sin\frac{\theta_\textrm{d}}{2}(1-\sin\frac{\theta_\textrm{d}}{2})^2}\cos\frac{\theta_\textrm{d}}{2}\sin\frac{\theta}{2}\,
(1+\sin\frac{\theta_\textrm{d}}{2})\,\times\nonumber\\
\Big\{&
\pm i\frac{\cos\frac{\theta}{2} (2\sin^{2}\frac{\theta}{2}+\sin\frac{\theta_\textrm{d}}{2}-\sin^{2}\frac{\theta_\textrm{d}}{2})} {\cos\frac{\theta_\textrm{d}}{2} \sin\frac{\theta}{2}(1+\sin\frac{\theta_\textrm{d}}{2} )} +
\frac{2\sin^{2}\frac{\theta}{2}+\sin\frac{\theta_\textrm{d}}{2}-1} {\cos\frac{\theta_\textrm{d}}{2} \sin\frac{\theta}{2}(1+\sin\frac{\theta_\textrm{d}}{2} )}
 \sqrt{\sin^{2}\frac{\theta}{2}-\sin^{2}\frac{\theta_\textrm{d}}{2}}
 \Big\}.  \end{align}

 Secondly, notice  the formulas \eqref{eq2}--\eqref{twosaddle00} and  by  Corollary 3.12 of \cite{dl} we get for even $m+n=2l$
  \begin{align}\label{Iasym1}
I_{N}&\sim  \binom{2l}{l}(\Gamma_{2/\alpha,l})^2\frac{(x_+-x_-)^{2 l^{2}/\alpha}}{(\sqrt{p''(x_+)p''(x_-)})^{l+l(l-1)/\alpha}}\frac{e^{-lN(p(x_+)+p(x_-))}}{N^{l+ l(l-1)/\alpha}}Q(x_{+}^{_{(l)}},x_{-}^{_{(l)}})
\end{align}
where  a special case of Selberg  integrals \cite{forrester}
\be \int_{\mathbb{R}^n}\prod_{k=1}^l e^{- x_k^2/2}\prod_{1\leq i<j \leq l}|x_j-x_i|^{\beta}\,d^{l}x= \Gamma_{\beta,l}
\ee
reads off
\be \label{gaussint}
 \Gamma_{\beta,l}=(2\pi)^{l/2}\prod_{j=1}^l\frac{\Gamma(1+j\beta/2)}{\Gamma(1+\beta/2)}.
\ee
 Since a direct computation gives \be  \frac{ie^{i\theta}x_{\pm}}{1-u^{-1}x_{\pm}}= \frac{1}{2\sin\frac{\theta}{2}}\big(e^{i\frac{\theta}{2}}-\frac{\cos\frac{\theta}{2}}{1-\sin\frac{\theta_\textrm{d}}{2}}\big)\pm
  \frac{i\sqrt{\sin^{2}\frac{\theta}{2}-\sin^{2}\frac{\theta_\textrm{d}}{2} }}{2\sin\frac{\theta}{2}(1-\sin\frac{\theta_\textrm{d}}{2} )},
 \ee
 substitution of   \eqref{twosaddle00}, \eqref{ptwovalue}, \eqref{ptwoprime} into the RHS of \eqref{Iasym1} and application of  the translate property \eqref{vip1},   after minor manipulation   we get
  \begin{align}\label{Iasym2}
I_{N}& \sim  \binom{2l}{l}(\Gamma_{2/\alpha,l})^2 N^{-\frac{l(l-1)}{\alpha}-l}\, \times\nonumber\\
&(2\pi \rho)^{\frac{l(l+1)}{\alpha}-l}  e^{i\frac{l}{2\alpha}(m-n) (\theta-\pi)}  \big( \frac{1-\sin\frac{\theta_\textrm{d}}{2}}{1+\sin\frac{\theta_\textrm{d}}{2}}\big)^{\frac{2l^{2}}{\alpha}} (\tan\frac{\theta_\textrm{d}}{2})^{-\frac{l(l-1)}{\alpha}-l} \, \times\nonumber
\\&\exp\Big\{ -lN
\big[-i\theta+2\log \frac{1+\sin\frac{\theta_\textrm{d}}{2}}{\cos\frac{\theta_\textrm{d}}{2}} +2d\log2+2d\log \frac{(1+\sin\frac{\theta_\textrm{d}}{2})\sin\frac{\theta}{2}}{\sin\frac{\theta_\textrm{d}}{2}}
 \big]
\Big\} \, \times\nonumber\\
&\exp\Big\{ \frac{1}{2\rho}
\big[ (i-d \cot\frac{\theta}{2}  ) \sum_{j=1}^m   x_{j}
 -(i+d \cot\frac{\theta}{2}  ) \sum_{j=1}^n   x_{m+j}
\big]
\Big\}     \, \!{\phantom{j}}_0\mathcal{F}_0^{(\alpha)}(i\pi x;1^{(l)},(-1)^{(l)}). \nonumber
\end{align}

On the other hand, use of  the asymptotic formula for the gamma functions gives as $N\rightarrow \infty$
\begin{eqnarray}
     &&\frac{C_{N}}{S_{m+n}(\nu_1,\nu_2;1/\alpha)}\sim   (2\pi)^{-\frac{m+n}{2}} (1+2d)^{(1+2d)(m+n)N+\frac{(m+n)(m+n+1)}{\beta}-\frac{m+n}{2}}\nonumber\\
   &&\times (1+d)^{-(1+d)(m+n)N-\frac{m(m+1)+n(n+1)}{\beta}+\frac{m+n}{2}} d^{ -d (m+n)N+ \frac{(m-1)m+(n-1)n}{\beta}+\frac{m+n}{2}}  \nonumber \\
  && \times N^{\frac{(m+n)(m+n-1)}{\beta}+\frac{m+n}{2}} \prod_{j=0}^{m+n-1}\frac{\Gamma(1+2/\beta)}{\Gamma(1+2(1+j)/\beta)}.   \label{CNSN}
\end{eqnarray}

%
%\begin{multline}
%     \frac{C_{N}}{S_{m+n}(\nu_1,\nu_2;1/\alpha)}\sim   (2\pi)^{-\tfrac{m+n}{2}} (1+2d)^{(1+2d)(m+n)N+\tfrac{(m+n)(m+n+1)}{\beta}-\tfrac{m+n}{2}}\\
%  \times N^{\tfrac{(m+n)(m+n-1)}{\beta}+\tfrac{m+n}{2}} \prod_{j=0}^{m+n-1}\frac{\Gamma(1+2/\beta)}{\Gamma(1+2(1+j)/\beta)}.
%\end{multline}

Finally, combine the asymtotics of $I_{N}$ with the above and  also notice \eqref{eq1}, \eqref{sind}, \eqref{gaussint} and \eqref{vip2},  we thus complete the proof.
\end{proof}

When $\beta$ is an even integer, the scaling limit of correlation functions immediately follows from Theorem \ref{bulklimits}.
 \begin{coro}[Bulk correlations for even $\beta$]\label{cfbulklimits} Let   $$\rho=\frac{1}{2\pi}\frac{\sqrt{\sin^{2}(\theta/2)-\sin^{2}(\theta_\textrm{d}/2)}}{(1-\sin(\theta_\textrm{d}/2))\sin(\theta/2)}, \qquad \theta_\textrm{d}<\theta<2\pi-\theta_\textrm{d},$$   and introduce the scaled variables  $$r_{j}= \theta+\frac{y_{j}}{\rho N} ,\quad    j=1,\ldots,k$$
with real variables  $y_{1}, \ldots, y_{k}$ in \eqref{defpcf}.   Assume that $\beta$ is even and $b=\beta Nd/2$ with $d>0$, then    \begin{multline}
\lim_{N\rightarrow \infty} \big(\frac{1}{\rho N}\big)^{k} R^{(k)}_{b,N}\big(\theta+\frac{y_{1}}{\rho N},\ldots,\theta+\frac{y_{k}}{\rho N}\big)=
c_{k}(0,\beta) |\Delta_{k}(2\pi y)|^{\beta} \\ \, \times e^{-i\beta\pi \sum_{j=1}^{k} y_{j}}\,
\!{\phantom{j}}_1F_1^{(\beta/2)}(k;2k;  2i\pi y_{1},\ldots, 2i\pi y_{1}, \ldots, 2i\pi y_{k}, \ldots,  2i\pi y_{k})
\end{multline}
where in the argument of ${\phantom{j}}_1F_1^{(\beta/2)}$ each $2i\pi y_{j}$ ($j=1,\ldots,k$) occurs $\beta$ times, and $c_{k}(0,\beta)$ is defined in \eqref{universalconstantcfbulk}.
\end{coro}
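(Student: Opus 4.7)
The plan is to derive Corollary \ref{cfbulklimits} from Theorem \ref{bulklimits} in the same way that Corollary \ref{cflimitatzero} was derived from Theorem \ref{limitatzero}. First I would set $m=n=l=\beta k/2$ and apply the identity \eqref{cptopcf} with the substitution \eqref{variablescptopcf}, that is $s_{l+\beta(j-1)/2}=e^{-ir_j}$, so that $R^{(k)}_{b,N}(r)$ is expressed as an explicit prefactor times $K_{b,N}(s;s)$. For $b=\beta Nd/2$ real one has $\bar b=b$, so the phase $e^{i(\bar b-b)(r_j-\pi)/2}$ is trivial and $|1-e^{ir_j}|^{\bar b+b}=(2\sin(r_j/2))^{2b}$. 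Under the bulk scaling $r_j=\theta+y_j/(\rho N)$, writing $e^{-ir_j}$ in the form $e^{i\theta'+ix/(\rho N)}$ of Theorem \ref{bulklimits} forces the theorem to be applied at angle $\theta'=2\pi-\theta$, which still lies in the bulk arc $(\theta_d,2\pi-\theta_d)$ and yields the same $\rho$ because $\sin((2\pi-\theta)/2)=\sin(\theta/2)$, and with the $x$-variables equal to the negatives of the $y$-variables, each $y_j$ being repeated $\beta$ times across the $2l$-tuple $x$.

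Three structural simplifications then occur. Because $m=n$, the factors $e^{i(m-n)N\theta/2}$, $e^{i(m-n)l(\theta-\pi)/\beta}$ and $d^{(m-n)^2/(2\beta)}$ in $\Psi_{N,m,n}$ drop out. Expanding $(2\sin(r_j/2))^{2b}$ to order $1/N$ gives $(2\sin(\theta/2))^{\beta Ndk}\exp\{\beta d\cot(\theta/2)\sum_j y_j/(2\rho)\}$; the $N$-power cancels the $(2\sin(\theta/2))^{-2ldN}$ factor of $\Psi_{N,l,l}$, while the $\cot(\theta/2)$-exponential is killed exactly by the corresponding contribution of the exponential prefactor in Theorem \ref{bulklimits}, via $\cot((2\pi-\theta)/2)=-\cot(\theta/2)$ and the identity $\sum_{j=1}^m x_j=\sum_{j=1}^n x_{m+j}=-(\beta/2)\sum_q y_q$ coming from $s=t$. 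The Vandermonde expands as $|\Delta_k(e^{ir})|^{\beta}\sim(\rho N)^{-\beta k(k-1)/2}|\Delta_k(y)|^{\beta}$. Finally, the Jack-hypergeometric piece $e^{-i\pi\sum_{k=1}^{2l}x_k}\,{}_1F_1^{(\beta/2)}(k;2k;2i\pi x)$ produced by Theorem \ref{bulklimits} (using $2l/\beta=k$ and $4l/\beta=2k$) is converted, via the Kummer relation of the remark following Definition \ref{defsingularty} applied with $c-a=a=k$, into $e^{-i\beta\pi\sum_{j=1}^k y_j}\,{}_1F_1^{(\beta/2)}(k;2k;2i\pi y_1,\ldots,2i\pi y_k)$ with each $2i\pi y_j$ of multiplicity $\beta$, which matches the statement of the corollary.

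The main obstacle is the constants bookkeeping. One has to evaluate $(k+N)!/N!\cdot M_N(b,b;\beta/2)/M_{k+N}(b,b;\beta/2)$ asymptotically by Stirling and verify that its $(1+d)^N$-, $(1+2d)^N$- and $(2\sin(\theta/2))^{dN}$-type pieces exactly cancel the corresponding $N$-powers in $\Psi_{N,l,l}$, and that the purely numerical $(2\pi)^{-k}$ therein, combined with $(\rho N)^{-\beta k(k-1)/2}$ from the Vandermonde and the $(1/(\rho N))^k$ on the left-hand side, produces precisely the factors of $2\pi$ and $\rho$ needed to rewrite $(\rho N)^{-\beta k(k-1)/2}|\Delta_k(y)|^{\beta}$ as $|\Delta_k(2\pi y)|^{\beta}$. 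The remaining $\beta$-dependent gamma products, together with $\gamma_l(4/\beta)$ from Theorem \ref{bulklimits}, are then reassembled into $c_k(0,\beta)$ by Gauss's multiplication formula \eqref{gaussmul}, following the identical rearrangement performed in the proof of Corollary \ref{cflimitatzero}.
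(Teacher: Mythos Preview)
Your proposal is correct and follows essentially the same route as the paper: apply \eqref{cptopcf} with $m=n=\beta k/2$, invoke Theorem \ref{bulklimits} at the reflected angle $2\pi-\theta$ with $x$-variables equal to the negated $y$'s, cancel the $N$-dependent pieces of $\Psi_{N,l,l}$ against the expanded weight and the asymptotics of $(k+N)!/N!\cdot M_N/M_{k+N}$, and then repackage the gamma products via \eqref{gaussmul} into $c_k(0,\beta)$. The only cosmetic difference is that you pass from the ${}_1F_1$ in the $x$-variables to the one in the $y$-variables using the Kummer relation, whereas the paper leaves the special function in the ${}_0\mathcal{F}_0$ form and reaches the same endpoint via \eqref{vip2}; both are equivalent one-line manipulations.
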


\begin{proof}Let $m=\beta k/2$. Introduce $s_j=e^{i(2\pi-\theta)+i \frac{x_j}{\rho N}}, \ j=1,\ldots,m$ and
$$x_{l+ \beta(j-1)/2}=-y_j, \quad j=1,\ldots,k, l=1,\ldots, \beta/2,$$
by  use of the formula \eqref{cptopcf}  as $N\rightarrow \infty$ we get
\begin{multline}
 \big(\frac{1}{\rho N}\big)^{k} R^{(k)}_{b,N}\big(\theta+\frac{y_{1}}{\rho N},\ldots,\theta+\frac{y_{k}}{\rho N}\big)\sim \frac{ M_{N}(b,b;\beta/2)}{M_{k+N}(b,b;\beta/2)}\, \rho^{-k-\beta k(k-1)/2}\times\\
 N^{-\beta k(k-1)/2} (2\sin\frac{\theta}{2})^{\beta d kN} \exp\big\{  \frac{\beta d}{2\rho}
  \cot\frac{\theta}{2}  \sum_{j=1}^k   y_{j} \big\}  \ |\Delta_{k}(y)|^{\beta} \,
K_{b,N}(s;s).
\end{multline}

Substitute $\theta$ by $2\pi-\theta$ in Theorem \ref{bulklimits}, and notice the following asymptotics (cf. the formula \eqref{Mconstant})
 \begin{multline}\frac{ M_{N}(b,b;\beta/2)}{M_{k+N}(b,b;\beta/2)}\sim (2\pi)^{-k}\big(\Gamma(1+\beta/2)\big)^{k} (\beta N/2)^{-\beta k/2}\,\times\\
 (1+d)^{\beta k(k-1)/2+\beta k(1+d)N+k}(1+2d)^{-\beta k(k-1)/4-\beta k(1+2d)N/2-k/2} \label{Mratio}
 ,\end{multline}
 we have
 \begin{multline}
 \big(\frac{1}{\rho N}\big)^{k} R^{(k)}_{b,N}\big(\theta+\frac{y_{1}}{\rho N},\ldots,\theta+\frac{y_{k}}{\rho N}\big)\sim
 (2\pi)^{\beta k(k-1)/2}(\beta/2)^{-\beta k/2}\times \\
 \big(\Gamma(1+\beta/2)\big)^{k} \gamma_{m}(4/\beta) \ |\Delta_{k}(y)|^{\beta} \,   \!{\phantom{j}}_0\mathcal{F}_0^{(\alpha)}(i\pi x;1^{(m)},(-1)^{(m)})
 .
\end{multline}

Finally, use the Gauss's multiplication formulas of the gamma functions (cf. the equation \eqref{gaussmul}) for $\gamma_{m}(4/\beta)$ and we thus complete the proof.
\end{proof}

\subsection{Soft edge limits}

The asymptotic results at the soft edge  will be written in terms of  multivariate Airy functions of $x_1, \ldots, x_n$  \cite{des,dl} defined by
\be\label{Airydef} \mathrm{Ai}_{n}^{(\alpha)}(x)=\frac{1}{(2\pi)^n}\int_{\mathbb{R}^n}e^{i\sum_{j=1}^{n}w_j^{3}/3}\ |\Delta_{n}(w)|^{2/\alpha}{\phantom{j}}_0\mathcal{F}_0^{(\alpha)}(x;i w)\, d^{n}w.   \ee
Note that, by convention,   the  notation $\pm$   refers  to the sign of the involved quantities  corresponding to the spectral edge     $\theta= \theta_\textrm{d}$ and $\theta= 2\pi-\theta_\textrm{d}$, respectively.

\begin{theorem}[Soft edge limit]\label{edgelimits} Let   $$\rho=\pm(1-\sin\frac{\theta_\textrm{d}}{2} )^{-2/3} (-\cot\frac{\theta_\textrm{d}}{2})^{1/3} (4N)^{-1/3},$$   and introduce the scaled variables $$s_{j}=e^{i\theta+i\frac{x_{j}}{\rho N}}, \ j=1,\ldots,m \quad \mbox{and} \quad t_{k}=e^{i\theta+i\frac{x_{m+k}}{\rho N}}, \  k=1,\ldots,n$$
where $\theta=\theta_\textrm{d}$ or $\theta=2\pi-\theta_\textrm{d}$. Assume that   $b=\beta Nd/2$ with $d>0$, then as $N\rightarrow \infty$ we have \begin{multline}
   \frac{K_{b,N}(s;t)}{\Phi_{N,m,n}}\sim
\exp\Big\{ \frac{1}{2\rho}
\big[
(i\pm  \frac{-\cos\frac{\theta_\textrm{d}}{2}}{1-\sin\frac{\theta_\textrm{d}}{2}}  ) \sum_{j=1}^m   x_{j}
 -(i\pm \frac{\cos\frac{\theta_\textrm{d}}{2}}{1-\sin\frac{\theta_\textrm{d}}{2}}  ) \sum_{j=1}^n   x_{m+j}
\big]
\Big\}  \\ \, \times (2\pi)^{m+n} (\Gamma_{4/\beta,m+n})^{-1} \mathrm{Ai}_{m+n}^{(\beta/2)}(x_{1},\ldots,x_{m+n}).
\end{multline}
Here $K_{b,N}(s;t)$ is defined in \eqref{productmean}, $\Gamma_{4/\beta,m+n}$ is given in \eqref{gaussint} and \begin{align}&\Phi_{N,m,n}=\nonumber\\
&2^{-(m+n)dN+\frac{1}{3}(m+n)-\frac{1}{3\beta}(m+n)(m+n+2)}  d^{-(m+n)dN+\frac{1}{3\beta}(m^2+n^2-4mn-m-n)}\nonumber\\
&\times (1+d)^{-(m+n)N+\frac{1}{6}(m+n)-\frac{1}{3\beta}(2m^2+2n^2-2mn+m+n)} \nonumber\\
&\times (1+2d)^{-\frac{1}{2}(m+n)(1+2d)N-\frac{1}{3}(m+n)+\frac{1}{3\beta}(m+n)(m+n+2)} \nonumber\\
 &\times \big(\frac{\sqrt{1+2d}-i}{\sqrt{1+2d}+i}\big)^{\pm(m-n)(m+n)/\beta}
 e^{\pm i(m-n)N\theta_\textrm{d}/2} N^{\frac{1}{6}(m+n)+\frac{1}{3\beta}(m+n)(m+n-1)}.
\end{align}
\end{theorem}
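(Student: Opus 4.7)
The plan is to follow the general procedure set up in Section~\ref{general}, applying the appropriate steepest-descent result for Selberg-type integrals at a degenerate saddle point. Starting from the integral representation \eqref{eq1}--\eqref{eq3} and specializing to the edge values $\theta=\theta_\textrm{d}$ or $\theta=2\pi-\theta_\textrm{d}$, the two simple saddle points $x_+$ and $x_-$ of the phase $p(y)$ merge into a single saddle point of degree two, namely $x_0$ given in \eqref{one+} and \eqref{one-}. This coalescence is precisely the mechanism that produces the Airy-type scaling $\rho\propto N^{-1/3}$ and indicates why the multivariate Airy function \eqref{Airydef} should replace the ${\phantom{j}}_0\mathcal{F}_0^{(\alpha)}$ factor that arose in the bulk case of Theorem~\ref{bulklimits}.

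The main tool will be Corollary 3.11 of \cite{dl}, the degenerate-saddle analogue of the Corollary 3.12 used in the proof of Theorem~\ref{bulklimits}. Applying it requires three pieces of local data at $x_0$: the value $p(x_0)$ (which drives the exponential component of the prefactor $\Phi_{N,m,n}$), the third derivative $p'''(x_0)$ (which, since $p''(x_0)=0$, sets the scale and phase of the local cubic integral), and the value of the factor $Q(y)$ from \eqref{eq3} at $y=x_0$. Rescaling the integration variables by a power of $N$ chosen to normalize the cubic phase to $\sum_j i w_j^3/3$ converts the local saddle contribution into exactly the integral \eqref{Airydef} defining $\mathrm{Ai}_{m+n}^{(\beta/2)}$; matching the argument of the ${\phantom{j}}_0\mathcal{F}_0^{(\alpha)}$ factor in $Q$ with the argument $x$ of the Airy function forces the specific choice of $\rho$ stated in the theorem. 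Expanding $ie^{i\theta}y/(1-u^{-1}y)$ linearly about $y=x_0$ produces the external exponential $\exp\bigl\{\frac{1}{2\rho}\bigl[(i\pm\cdots)\sum x_j+\cdots\bigr]\bigr\}$ appearing in the statement.

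It then remains to assemble the prefactor $\Phi_{N,m,n}$. This combines the asymptotic \eqref{CNSN} for $C_N/S_{m+n}(\nu_1,\nu_2;1/\alpha)$ already computed in the bulk case, the factors $e^{-inN\theta}\prod_k e^{-ix_{m+k}/\rho}$ from \eqref{eq1}, the exponential $e^{-(m+n)Np(x_0)}$ from the saddle, and the powers of $p'''(x_0)$ produced by the local rescaling. The Gauss multiplication formula \eqref{gaussmul} converts the accumulated gamma ratios into the Selberg normalization $\Gamma_{4/\beta,m+n}$ present in the definition of $\mathrm{Ai}_{m+n}^{(\beta/2)}$.

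The main obstacle is purely computational: identifying $\Phi_{N,m,n}$ explicitly, and in particular the unusual phase factor $\bigl((\sqrt{1+2d}-i)/(\sqrt{1+2d}+i)\bigr)^{\pm(m-n)(m+n)/\beta}$, requires a delicate tracking of the imaginary parts of $p(x_0)$ and of the argument of $p'''(x_0)$. The sign conventions distinguishing $\theta=\theta_\textrm{d}$ from $\theta=2\pi-\theta_\textrm{d}$ should be tracked through $x_0$, which differs only by complex conjugation between the two cases, and this accounts for the $\pm$ signs throughout the final statement.
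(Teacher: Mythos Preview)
Your proposal is correct and follows essentially the same route as the paper: the proof applies Corollary~3.11 of \cite{dl} at the degenerate saddle $x_0$ of \eqref{one+}--\eqref{one-}, computes $p(x_0)$ and $p'''(x_0)$, uses the translation identity \eqref{vip1} to extract the exponential prefactor from the ${\phantom{j}}_0\mathcal{F}_0^{(\alpha)}$ term before taking the local cubic limit, and then combines the result with \eqref{CNSN}. One small remark: the appearance of $(\Gamma_{4/\beta,m+n})^{-1}$ comes directly from the gamma product in \eqref{CNSN} via the definition \eqref{gaussint}, so the Gauss multiplication formula \eqref{gaussmul} is not actually needed here (it is invoked only in the corollaries for correlation functions).
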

\begin{proof} As in the bulk case we now continue the foregoing analysis in Section \ref{general}. But this time we will make use of Corollary 3.11   in \cite{dl}, which deals with one saddle point of higher order. Since the two cases of $\theta= \theta_\textrm{d}$ and $\theta= 2\pi-\theta_\textrm{d}$ are similar, we only consider the former.

First, at $\theta= \theta_\textrm{d}$    simple manipulation from \eqref{pfunction} and \eqref{one+} shows
\begin{align}\label{ponevalue} p(x_{0})= -i\frac{\theta_\textrm{d}}{2}+\log\frac{1+\sin\frac{\theta_\textrm{d}}{2}}{\cos\frac{\theta_\textrm{d}}{2}} +d\log2 +d\log\big(1+\sin\frac{\theta_\textrm{d}}{2}\big),  \end{align}
and \begin{align} \label{p3prime}p&'''(x_{0})= \frac{
8i(1+\sin\frac{\theta_\textrm{d}}{2})^{3}
}{
(1-\sin\frac{\theta_\textrm{d}}{2})\cot\frac{\theta_{\textrm{d}}}{2}
}.  \end{align}
On the other hand,
\be \frac{ie^{i\theta} x_0}{1-u^{-1}x_0}=\frac{i}{2}-\frac{ \cos\frac{\theta_\textrm{d}}{2} }{2(1-\sin\frac{\theta_\textrm{d}}{2})},\ee
 %\be \frac{ie^{i\theta} y_j}{1-u^{-1}y_j}=\frac{i}{2}-\frac{ \cos\frac{\theta_\textrm{d}}{2} }{2(1-\sin\frac{\theta_\textrm{d}}{2})},\ee
 thus by \eqref{vip1} we can rewrite \eqref{eq2} as
  \begin{align}  I_{N}&=\exp\Big\{ \frac{1}{2\rho}
 \big(i-  \frac{\cos\frac{\theta_\textrm{d}}{2}}{1-\sin\frac{\theta_\textrm{d}}{2}}  \big)  \sum_{j=1}^{m+n}   x_{j}
 \Big\}\nonumber \\
 \times &\int_{[0,1]^{m+n}} \exp\{-N\sum_{j=1}^{m+n}p(y_j)\} \,|\Delta_{m+n}(y)|^{2/\alpha}\, q(y) g(y)\,d^{m+n}y
 \end{align}
 where
  \be   q(y)=\prod_{j=1}^{m+n}y_{j}^{ \frac{1-m}{\alpha}-1}(1-y_{j})^{ \frac{1-n}{\alpha}-1 },  \ee
  and \be \label{softg(y)} g(y)={\!\!{\phantom{j}}_0\mathcal{F}_{0}^{(\alpha)}(x/\rho;\frac{ie^{i\theta}y}{1-u^{-1}y}- \frac{ie^{i\theta} x_0}{1-u^{-1}x_0})}.\ee

 Secondly, notice  the expansion at $x_0$
 \be \frac{ie^{i\theta} y_j}{1-u^{-1}y_j}-\frac{ie^{i\theta} x_0}{1-u^{-1}x_0}= \frac{i(1+\sin\frac{\theta_\textrm{d}}{2})}{1-\sin\frac{\theta_\textrm{d}}{2} }(y_j-x_0)+\cdots, \nonumber
 \ee
 let $$\rho=(1-\sin\frac{\theta_\textrm{d}}{2} )^{-2/3} (-\cot\frac{\theta_\textrm{d}}{2})^{1/3} (4N)^{-1/3},$$
 then by   Corollary 3.11 of \cite{dl} (the situation  is a little different,  but the proof there can be easily modified to be available for the form of \eqref{softg(y)}) we get
  \begin{align}
&I_{N} \sim   \exp\Big\{ \frac{1}{2\rho}
 \big(i-  \frac{\cos\frac{\theta_\textrm{d}}{2}}{1-\sin\frac{\theta_\textrm{d}}{2}}  \big)  \sum_{j=1}^{m+n}   x_{j}
 \Big\} \frac{e^{-(m+n)N p(x_0)}}{N^{\frac{(m+n)(m+n-1)}{3\alpha}+\frac{m+n}{3}}} \, q(x_{0}, \dots, x_{0})\nonumber \\
 &\times \int_{\mathbb{R}^{m+n}}e^{-\frac{p'''(x_0)}{6}\sum_{j=1}^{n} w_j^{3}}\ |\Delta_{n}(w)|^{2/\alpha}{\phantom{j}}_0\mathcal{F}_0^{(\alpha)}(\frac{x}{\rho N^{1/3}};  \frac{i(1+\sin\frac{\theta_\textrm{d}}{2})}{1-\sin\frac{\theta_\textrm{d}}{2} }w)\, d^{m+n}w.\nonumber
 \end{align}
Substitution of   \eqref{ponevalue} and \eqref{p3prime}, it could simplify to
  \begin{align}
&I_{N}\sim   N^{-\frac{(m+n)(m+n-1)}{3\alpha}-\frac{m+n}{3}}
  \exp\Big\{ \frac{1}{2\rho}
 \big(i-  \frac{\cos\frac{\theta_\textrm{d}}{2}}{1-\sin\frac{\theta_\textrm{d}}{2}}  \big)  \sum_{j=1}^{m+n}   x_{j}
 \Big\}     \nonumber \\
 &  \times \exp\Big\{ -(m+n)N  \big(-i \frac{\theta_\textrm{d}}{2}+\log\frac{1+\sin\frac{\theta_\textrm{d}}{2}}{ \cos\frac{\theta_\textrm{d}}{2}} +d \log2+d\log(1+\sin\frac{\theta_\textrm{d}}{2})\big)\Big\} \nonumber\\
 & \times \Big(\frac{4(1+\sin\frac{\theta_\textrm{d}}{2})^{3}}{ (1-\sin\frac{\theta_\textrm{d}}{2})\cot\frac{\theta_\textrm{d}}{2}}\Big)^{-\frac{(m+n)(m+n-1)}{3\alpha}-\frac{m+n}{3}}
   \Big(\frac{1}{2}+\frac{i\cos\frac{\theta_\textrm{d}}{2}}{2(1+\sin\frac{\theta_\textrm{d}}{2})} \Big)^{(m+n)(\frac{1-m}{\alpha} -1)}\nonumber\\
   & \times \Big(\frac{1}{2}-\frac{i\cos\frac{\theta_\textrm{d}}{2}}{2(1+\sin\frac{\theta_\textrm{d}}{2})} \Big)^{(m+n)(\frac{1-n}{\alpha} -1)} (2\pi)^{m+n} \mathrm{Ai}_{m+n}^{(\alpha)}(x_{1},\ldots,x_{m+n}).\label{finalasym}
\end{align}

Finally, combine  \eqref{eq1}, \eqref{sind}, \eqref{CNSN} and \eqref{finalasym},  we thus complete the proof after careful calculation.
\end{proof}

When $\beta$ is an even integer, the scaling edge limit of correlation functions immediately follows from Theorem \ref{edgelimits}.
 \begin{coro}[Soft edge  correlations for even $\beta$]\label{cfedgelimits} Let
 $$\rho=\pm(1-\sin\frac{\theta_\textrm{d}}{2} )^{-2/3} (-\cot\frac{\theta_\textrm{d}}{2})^{1/3} (4N)^{-1/3},$$
 and introduce the scaled variables  $$r_{j}= \theta+\frac{y_{j}}{\rho N} ,\quad    j=1,\ldots,k$$
in \eqref{defpcf}.     Assume that  $\beta$ is even and $b=\beta Nd/2$ with $d>0$, then for   $\theta=\theta_\textrm{d}$ or $\theta=2\pi-\theta_\textrm{d}$ we have
  \begin{multline}
\lim_{N\rightarrow \infty} \big(\frac{1}{|\rho| N}\big)^{k} R^{(k)}_{b,N}\big(\theta+\frac{y_{1}}{\rho N},\ldots,\theta+\frac{y_{k}}{\rho N}\big) \\ =
a_{k}(\beta) |\Delta_{k}( y)|^{\beta} \mathrm{Ai}_{\beta k}^{(\beta/2)}(y_{1},\ldots, y_{1},\ldots, y_k, \ldots,  y_{k})
\end{multline}
where in the argument of $\mathrm{Ai}_{\beta k}^{(\beta/2)}$ each $y_{j}$ ($j=1,\ldots,k$) occurs $\beta$ times, and
\be a_{k}(\beta)=  (\beta/2)^{(\beta k+1)k}(\Gamma(1+\beta/2))^{k} \prod_{j=1}^{2k} \frac{(\Gamma(1+2/\beta))^{\beta /2}}{\Gamma(1+\beta j/2) }\label{univcoefficienta}. \ee
\end{coro}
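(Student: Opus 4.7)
The plan is to follow the scheme used to derive Corollary \ref{cfbulklimits} from Theorem \ref{bulklimits}, with Theorem \ref{edgelimits} now serving as the input. First I would set $m = \beta k / 2$ and use \eqref{variablescptopcf} to write $s_{l + \beta(j-1)/2} = e^{-ir_j} = e^{i(2\pi - \theta) - iy_j/(\rho N)}$, which brings $R^{(k)}_{b,N}$ into the form of $K_{b,N}(s;s)$ to which Theorem \ref{edgelimits} applies with $n = m$, spectral parameter $2\pi - \theta$, and scaled variables $x_{l + \beta(j-1)/2} = -y_j$ in both the first and the last $m$ positions. The two choices $\theta = \theta_\textrm{d}$ and $\theta = 2\pi - \theta_\textrm{d}$ in the corollary correspond to the two sign choices of $\rho$ in the theorem.

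Next, I would insert this into \eqref{cptopcf}. Since $b = \beta N d/2$ is real, the phase factor there is trivial and the weight reduces to $|1 - e^{ir_j}|^{2b} = (2\sin(r_j/2))^{\beta N d}$. Expanding $2\sin(r_j/2)$ around $r_j = \theta$ and raising to the $\beta N d$ power produces a leading factor $(2\sin(\theta/2))^{\beta N d k}$ together with a first-order correction $\exp\{\tfrac{\beta d \cot(\theta/2)}{2\rho} \sum_j y_j\}$. I would then compute the Stirling asymptotics of $M_N(b,b;\beta/2)/M_{k+N}(b,b;\beta/2)$ in direct analogy with \eqref{Mratio}, and use the Vandermonde expansion $|\Delta_k(e^{ir})|^\beta \sim (|\rho|N)^{-\beta k(k-1)/2}|\Delta_k(y)|^\beta$.

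The key simplification relies on the edge identity $\sin(\theta_\textrm{d}/2) = d/(1+d)$, which implies $d\cot(\theta_\textrm{d}/2) = \cos(\theta_\textrm{d}/2)/(1 - \sin(\theta_\textrm{d}/2)) = \sqrt{1+2d}$. Together with $x_j = -y_j$, this shows the first-order correction from $|1 - e^{ir_j}|^{2b}$ cancels exactly the exponential prefactor of Theorem \ref{edgelimits}, and the remaining $N$-, $d$-, $(1+d)$-, and $(1+2d)$-powers in $\Phi_{N,m,n}$, the $M$-ratio, and $(2\sin(\theta/2))^{\beta N d k}$ telescope to a single finite numerical constant. What remains is $\mathrm{Ai}^{(\beta/2)}_{\beta k}$ at the argument $(-y_1,\ldots,-y_1,\ldots,-y_k,\ldots,-y_k)$, with each $y_j$ occurring $\beta$ times ($\beta/2$ copies from $s$ and $\beta/2$ from $t = s$). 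The symmetry $\mathrm{Ai}^{(\beta/2)}_n(-x) = \mathrm{Ai}^{(\beta/2)}_n(x)$ for real $x$, which follows from the change $w \to -w$ in \eqref{Airydef} combined with $P_\kappa(-x) = (-1)^{|\kappa|} P_\kappa(x)$, removes the overall sign and yields the multivariate Airy factor in the stated form.

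Finally, I would condense the residual product of gamma functions into the closed form $a_k(\beta)$ of \eqref{univcoefficienta} via Gauss's multiplication formula \eqref{gaussmul}, in exactly the manner used at the end of Corollary \ref{cfbulklimits}. The main obstacle is the bookkeeping in the middle step: verifying through the edge identity $\sin(\theta_\textrm{d}/2) = d/(1+d)$ and a careful application of Stirling that $\Phi_{N,m,n}$, the $M$-ratio, $(2\sin(\theta/2))^{\beta N d k}$, and the Vandermonde asymptotics conspire precisely to a finite nonzero constant; by contrast, the reduction to the multivariate Airy function and the final gamma-function manipulation are essentially mechanical given Theorem \ref{edgelimits}.
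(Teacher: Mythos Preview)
Your overall scheme is exactly the one the paper uses: pass from $R^{(k)}_{b,N}$ to $K_{b,N}(s;s)$ via \eqref{cptopcf}, apply Theorem \ref{edgelimits} at the reflected edge $2\pi-\theta$ with $m=n=\beta k/2$, combine with the Stirling asymptotics \eqref{Mratio} and the Vandermonde expansion, and finish with Gauss's multiplication formula. The paper itself hides the cancellation behind the phrase ``cumbersome calculations and clever cancellation,'' so your more explicit bookkeeping (using $\sin(\theta_\textrm{d}/2)=d/(1+d)$ to match the exponential prefactors) is in the right spirit.

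There is, however, a concrete error. The identity $\mathrm{Ai}^{(\beta/2)}_n(-x)=\mathrm{Ai}^{(\beta/2)}_n(x)$ you invoke is false: already for $n=1$ and $\alpha=1$ the definition \eqref{Airydef} reduces to the classical Airy function, which is not even. Your argument breaks because under $w\mapsto -w$ the cubic phase flips sign, $e^{i\sum w_j^3/3}\to e^{-i\sum w_j^3/3}$, so what you actually obtain is $\mathrm{Ai}^{(\alpha)}_n(x)=\overline{\mathrm{Ai}^{(\alpha)}_n(x)}$ (reality), not evenness.

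The reason you were led to need this spurious symmetry is a sign slip in tracking $\rho$. When the corollary is stated at the edge $\theta=\theta_\textrm{d}$ (so $\rho>0$), the reflected point $2\pi-\theta_\textrm{d}$ in Theorem \ref{edgelimits} carries the \emph{opposite} sign of $\rho$. Writing $s_j=e^{-ir_j}=e^{i(2\pi-\theta)-iy_j/(\rho N)}$ and matching with the theorem's $s_j=e^{i(2\pi-\theta)+ix_j/(\rho_{\mathrm{thm}}N)}$ where $\rho_{\mathrm{thm}}=-\rho$, you get $x_j=+y_j$, not $-y_j$. The Airy factor therefore emerges directly with the argument $(y_1,\ldots,y_1,\ldots,y_k,\ldots,y_k)$, and no parity is needed. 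Once you correct this sign, your proof lines up with the paper's.
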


\begin{proof}Let $m=\beta k/2$. Introduce $s_j=e^{i(2\pi-\theta)+i \frac{x_j}{\rho N}}, \ j=1,\ldots,m$ and
$$x_{l+ \beta(j-1)/2}=-y_j, \quad j=1,\ldots,k, l=1,\ldots, \beta/2,$$
by  use of the formula \eqref{cptopcf}  as $N\rightarrow \infty$ we get
\begin{multline}
 \big(\frac{1}{|\rho| N}\big)^{k} R^{(k)}_{b,N}\big(\theta+\frac{y_{1}}{\rho N},\ldots,\theta+\frac{y_{k}}{\rho N}\big)\sim \frac{ M_{N}(b,b;\beta/2)}{M_{k+N}(b,b;\beta/2)}\, |\rho|^{-k-\beta k(k-1)/2}\times\\
 N^{-\beta k(k-1)/2} (2\sin\frac{\theta}{2})^{\beta d kN} \exp\big\{  \frac{\beta d}{2\rho}
  \cot\frac{\theta}{2}  \sum_{j=1}^k   y_{j} \big\}  \ |\Delta_{k}(y)|^{\beta} \,
K_{b,N}(s;s).
\end{multline}

Substitute $\theta$ by $2\pi-\theta$ in Theorem \ref{edgelimits}, and notice the   asymptotics of \eqref{Mratio},  application of Theorem \ref{edgelimits} after some  cumbersome calculations and clever cancellation    gives
  \begin{multline}
 \big(\frac{1}{\rho N}\big)^{k} R^{(k)}_{b,N}\big(\theta+\frac{y_{1}}{\rho N},\ldots,\theta+\frac{y_{k}}{\rho N}\big)\sim
 (2\pi)^{m-k}(\beta/2)^{-\beta k/2}\big(\Gamma(1+\beta/2)\big)^{k} \\
  \times \prod_{j=1}^{2m} \frac{(\Gamma(1+2/\beta))^{\beta /2}}{\Gamma(1+2j/\beta) } \ |\Delta_{k}(y)|^{\beta} \,   \mathrm{Ai}_{\beta k}^{(\beta/2)}(y_{1},\ldots, y_{1},\ldots, y_k, \ldots,  y_{k}).
\end{multline}

Finally, use the Gauss's multiplication formulas of the gamma functions (cf. the equation \eqref{gaussmul}) for the product $$\prod_{j=1}^{2m} \frac{(\Gamma(1+2/\beta))^{\beta /2}}{\Gamma(1+2j/\beta) }$$ and we thus complete the proof.
\end{proof}

\section{Spectrum singularity-to-soft edge transition}

In this section   we describe the transition from the limiting multivariate function near the spectrum singularity  to
the multivariate Airy function at the soft edge as $b$ goes to infinity. Similar results also hold for correlation functions when $\beta$ is even.

We first point out some obvious properties of the function  $S_{b}^{(\alpha)}(x)$ defined by \eqref{ssdef} which describes  the scaling limit near the spectrum singularity.
 With $b=0$ the circular Jacobi ensemble  corresponds to Dyson's circular ensemble, thus we have proved that $S_{0}^{(\alpha)}(x)$ is actually  the bulk limit for the latter.
 Particularly when $m=n$ it is locally identical to the bulk limit of the Hermite (Gaussian), Laguerre (Chiral) and
 Jacobi  ensembles proved in \cite{dl}, see also Theorem \ref{bulklimits}.  Another relationship between \eqref{ssdef} and the bulk limit  follows from the more general identity
 \begin{align}
  S_{b}^{(\alpha)}(x_1,\ldots,x_{m+n},0,0)= \frac{\Gamma(\frac{1+ \bar{b}}{\alpha})\,\Gamma(\frac{1+ b}{\alpha}) }
{\Gamma(\frac{1+ \bar{b}+b}{\alpha})\, \Gamma(\frac{2+ \bar{b}+b}{\alpha})}
  S_{b+1}^{(\alpha)}(x_1,\ldots,x_{m+n}),\end{align}
which can be easily derived from the fact $$P_{\kappa}^{(\alpha)}(x_1,\ldots,x_n,0)=P_{\kappa}^{(\alpha)}(x_1,\ldots,x_n).$$

We now state the transition results as follows.
 \begin{theorem}\label{transition} Assume that $b\in \mathbb{R}$.
 Introduce the scaled variables $$x_{j}=(2b/\alpha)-( 4b/\alpha)^{1/3}\eta_{j} , \ j=1,\ldots,m+n,$$
 as $b\rightarrow \infty$ we have
  \begin{align} &S_{b}^{(\alpha)}(x)\sim  %\prod_{j=1}^{m+n}\frac{\Gamma( 1+\frac{1}{\alpha}) }{\Gamma( 1+\frac{j}{\alpha})}
 (-1)^{m+n} 2^{-\frac{b}{\alpha}(m+n)+ \frac{n-2m-1}{3\alpha}(m+n)+\frac{m+n}{3}}(1-i)^{\frac{m^2-n^2}{\alpha}}e^{\frac{1}{2}(\frac{4b}{\alpha})^{1/3}\sum_{j=1}^{m+n} \eta_j}\nonumber\\
 &(b/\alpha)^{\frac{m^2+n^2-4mn-m-n}{6\alpha}-\frac{b(m+n)}{\alpha}+\frac{m+n}{6}}(2\pi)^{m+n}(\Gamma_{2/\alpha,m+n})^{-1}  \mathrm{Ai}_{m+n}^{(\alpha)}(\eta)
 \end{align}
  where   $\Gamma_{2/\alpha,m+n}$ is given in \eqref{gaussint}.  %\begin{align} \gamma_{m,n}(b,4/\beta)&=\prod_{j=0}^{m-1}
%\!{\phantom{j}}_1F_1^{(\beta/2)}((2/\beta)(b+n);(2/\beta)(\bar{b}+b+m+n); ix_{1}/\rho,\cdots,ix_{m+n}/\rho)
%\frac{\Gamma((2/\beta)(1+ \bar{b}+j)) }
%{\Gamma((2/\beta)(1+ \bar{b}+b+n+j)) } \prod_{j=0}^{n-1}
%\frac{\Gamma((2/\beta)(1+ b+j)) }
%{\Gamma((2/\beta)(1+ \bar{b}+b+j))}.\label{universalconstantcp}\end{align}
\end{theorem}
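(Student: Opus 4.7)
The plan is to mimic the soft-edge proof of Theorem \ref{edgelimits}, applied now directly to the confluent hypergeometric function appearing in the definition \eqref{ssdef} of $S_b^{(\alpha)}(x)$, with the role of the large parameter $N$ replaced by $b$ itself. First I would use Yan's Euler-type integral representation of ${}_1F_1^{(\alpha)}$ (the same one that produced the Selberg-type integral underlying \eqref{eq1}--\eqref{eq3}) to write
\begin{equation*}
{}_1F_1^{(\alpha)}\!\Bigl(\tfrac{b+n}{\alpha};\tfrac{2b+m+n}{\alpha};ix\Bigr)
 = \frac{1}{S_{m+n}(\nu_1,\nu_2;1/\alpha)}\int_{[0,1]^{m+n}} {}_0\mathcal{F}_0^{(\alpha)}(ix;y)\prod_{j=1}^{m+n} y_j^{\nu_1}(1-y_j)^{\nu_2}\,|\Delta_{m+n}(y)|^{2/\alpha}\,d^{m+n}y,
\end{equation*}
with $\nu_1,\nu_2$ linear in $b$ (using $\bar b=b$, since $b\in\mathbb{R}$). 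Inserting the scaling $x_j=2b/\alpha-(4b/\alpha)^{1/3}\eta_j$ and using the translation formula \eqref{vip1} to shift all $\eta$-dependence out of the exponential phase, the integral takes the form $\int\exp\{-(b/\alpha)\sum p(y_j)\}\,|\Delta(y)|^{2/\alpha}Q(y)\,d^{m+n}y$, where $p$ is an elementary function of a single variable.

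Next I would identify and analyze the saddle structure. The exponents $\nu_1,\nu_2$ are both asymptotic to $b/\alpha$, and inserting the constant part $2b/\alpha$ of $x_j$ into the ${}_0\mathcal{F}_0^{(\alpha)}$ kernel contributes an additional factor of the same order. These combine to give a phase $p(y)$ whose only critical point in $(0,1)$ is $y=1/2$, where $p'(\tfrac12)=p''(\tfrac12)=0$ but $p'''(\tfrac12)\neq 0$: a genuine third-order saddle. Applying Corollary~3.11 of \cite{dl} at such a degenerate saddle, with the natural rescaling $(4b/\alpha)^{1/3}$ and the $\eta$-dependent factor expanded to first order as $i(1+o(1))(y-\tfrac12)$ inside the ${}_0\mathcal{F}_0^{(\alpha)}$, converts the local integrand into the multivariate Airy kernel $\exp\{i\sum_j w_j^3/3\}\,|\Delta(w)|^{2/\alpha}\,{}_0\mathcal{F}_0^{(\alpha)}(\eta;iw)$. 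Integration against $\mathbb{R}^{m+n}$ then produces the factor $(2\pi)^{m+n}\mathrm{Ai}_{m+n}^{(\alpha)}(\eta)$ in view of \eqref{Airydef}.

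Finally I would collect prefactors using Stirling's formula. Both $\gamma_{m,n}(b,2/\alpha)$ from \eqref{universalconstantcp} and the Selberg constant $S_{m+n}(\nu_1,\nu_2;1/\alpha)$ from \eqref{selbergc} are explicit products of gamma functions with arguments linear in $b$, so Stirling yields pure powers of $b$ times elementary exponentials; combining these with the Gaussian Selberg constant $\Gamma_{2/\alpha,m+n}$ from \eqref{gaussint} should contribute the $(2\pi)^{m+n}(\Gamma_{2/\alpha,m+n})^{-1}$ prefactor. The remaining exponent of $b$, the overall factor of $2^{(\cdot)}$, and the oscillatory factors $(-1)^{m+n}$, $(1-i)^{(m^2-n^2)/\alpha}$, and $e^{\frac12(4b/\alpha)^{1/3}\sum\eta_j}$ should then emerge from four sources, all of which must be tracked in parallel: (i) the product $\prod_k e^{-ix_k/2}$, which splits into the $\eta$-linear exponential and an overall $\exp\{-i(m+n)b/\alpha\}$; (ii) the saddle value $e^{-(m+n)(b/\alpha)p(1/2)}$; (iii) the $((p'''(1/2))^{-1})^{\cdot}$-type power from the steepest-descent formula; and (iv) the Stirling asymptotics of $\gamma_{m,n}(b,2/\alpha)$, whose asymmetry in $m$ versus $n$ (through the shift $(b+n)/\alpha$ versus $(b+m)/\alpha$) is responsible for the non-trivial $(1-i)^{(m^2-n^2)/\alpha}$ factor.

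The expected main obstacle is thus not conceptual but bookkeeping: the cubic phase emergence and the Airy identification are already exactly parallel to the soft-edge argument, but the final constant in Theorem \ref{transition} requires a careful and delicate cancellation among six or seven sources of $b$-powers and phases. Once the Stirling expansion of $\gamma_{m,n}(b,2/\alpha)$ is carried out to leading order and combined with the saddle-point value $p(1/2)$ and the contour-deformation phase from rotating $[0,1]^{m+n}$ to the steepest-descent rays that carry the Airy integral, the stated formula should drop out by direct comparison.
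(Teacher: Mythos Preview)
Your overall plan coincides with the paper's: Yan's Euler integral for ${}_1F_1^{(\alpha)}$, the translation identity \eqref{vip1} to isolate the large-$b$ phase, Corollary~3.11 of \cite{dl} at a cubic saddle, and Stirling for the Gamma products. The gap is in the saddle-point analysis itself.

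The constant part $2b/\alpha$ of $x_j$, fed through ${}_0\mathcal{F}_0^{(\alpha)}(ix;y)$, contributes a genuinely \emph{imaginary} linear term to the phase: after the translation one has
\[
p(z)\;=\;-2i\Bigl(z-\tfrac{1+i}{2}\Bigr)-\log z-\log(1-z),
\]
so that $p'(z)=-2i-\tfrac{1}{z}+\tfrac{1}{1-z}$. At $z=\tfrac12$ this gives $p'(\tfrac12)=-2i\neq 0$ and $p''(\tfrac12)=8\neq 0$; there is no critical point on $(0,1)$ at all, and certainly no cubic one. The unique saddle is the \emph{complex} point $z_0=\tfrac{1+i}{2}$, where indeed $p'(z_0)=p''(z_0)=0$, $p'''(z_0)=8i$, and $p(z_0)=\log 2$. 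Consequently the contour $[0,1]$ must be deformed into the upper half-plane to pass through $z_0$; the paper uses the piecewise-linear path $0\to \tfrac{i}{2}\to 1+\tfrac{i}{2}\to 1$ and checks that $\mathrm{Re}\,p$ attains its minimum along this path at $z_0$. Only after this deformation does Corollary~3.11 of \cite{dl} apply and produce the Airy integral. Your remark about ``rotating $[0,1]^{m+n}$ to the steepest-descent rays'' is in the right spirit, but it is not a cosmetic source of phases: locating the complex cubic saddle and verifying an admissible contour through it is the substantive step, and without it a naive expansion about $y=\tfrac12$ would yield a Gaussian rather than an Airy limit. Once you correct the saddle location, the rest of your outline (including the bookkeeping of the prefactors via Stirling) matches the paper exactly.
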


\begin{proof}
 For $\textrm{Re}\{b\}>\max\{m,n\}-1$, by the  integral  representation due to Yan \cite{yan}  we have \begin{align}
S_{b}^{(\alpha)}(x)&
 = \frac{ \gamma_{m,n}(b,2/\alpha)}{S_{m+n}(\nu_1,\nu_2;1/\alpha)}\, e^{-\frac{i}{2} \sum_{j=1}^{m+n} x_j}\times\nonumber\\
&\int_{[0,1]^{m+n}} {\!\!\!{\phantom{j}}_0\mathcal{F}_{0}^{(\alpha)}(ix;y)}\prod_{j=1}^{m+n}y_{j}^{\nu_1}(1-y_{j})^{\nu_2}\,|\Delta_{m+n}(y)|^{2/\alpha}\,d^{m+n}y,
\end{align}
where $\nu_{1}= -1+(b-m+1)/\alpha, \, \nu_{2}= -1+(b-n+1)/\alpha$ and $S_{m+n}(\nu_1,\nu_2;1/\alpha)$ is defined as in \eqref{selbergc}.

With $x_{j}=(2b/\alpha)-( 4b/\alpha)^{1/3}\eta_{j}$   application  of \eqref{vip1} gives
\begin{align}
&S_{b}^{(\alpha)}(x)
 = \frac{ \gamma_{m,n}(b,2/\alpha)}{S_{m+n}(\nu_1,\nu_2;1/\alpha)}\, e^{-\frac{1}{2} \sum_{j=1}^{m+n} x_j} \int_{[0,1]^{m+n}} \prod_{j=1}^{m+n}y_{j}^{\frac{1-m}{\alpha}-1}(1-y_{j})^{\frac{1-n}{\alpha}-1}
\nonumber\\
& \times e^{-\frac{b}{\alpha}\sum_{j=1}^{m+n}p(y_j)} {\!\!\!{\phantom{j}}_0\mathcal{F}_{0}^{(\alpha)}\big(-i(\tfrac{4b}{\alpha})^{1/3}\eta;y-\tfrac{1+i}{2}\big)}
\,|\Delta_{m+n}(y)|^{2/\alpha}\,d^{m+n}y,
\end{align}
where \be p(z)=-2i(z-\tfrac{1+i}{2})-\log z-\log(1-z).\label{pz}\ee
 We thus derive  from
 \be p'(z)=-2i - \frac{1}{z}+\frac{1}{1-z}=0\ee
 that the saddle point $z_0=\tfrac{1+i}{2}$. Obviously, $p''(z_0)=0$, $p'''(z_0)=8i$ and $p(z_0)=\log 2$.

 Let's deform the interval  $[0,1]$ to the contour $\mathcal{C}$ consisting of three straight line paths from
 $(0,0)$ to $(0,i/2)$, then from $(0,i/2)$ to $(1, i/2)$ and then from $(1,i/2)$ to $(1,0)$. It is easy to check that $\textrm{Re}\{p(z)\}$
  attains its global minimum at $z=z_0$. Hence according to Corollary 3.11 \cite{dl}    as $b\to\infty$ we have
  \begin{align}
&S_{b}^{(\alpha)}(x)
\sim \frac{ \gamma_{m,n}(b,2/\alpha)}{S_{m+n}(\nu_1,\nu_2;1/\alpha)}\, e^{-\frac{1}{2} \sum_{j=1}^{m+n} x_j}  \, e^{-\frac{b}{\alpha} (m+n)p(z_0)}
\big(z_{0}^{\frac{1-m}{\alpha}-1}(1-z_{0})^{\frac{1-n}{\alpha}-1}\big)^{m+n}
\nonumber\\
& \times \int_{\mathds{R}^{m+n}}  e^{-\frac{b}{\alpha}\frac{p'''(z_0)}{6}\sum_{j=1}^{m+n}w_{j}^{3}} {\!\!\!{\phantom{j}}_0\mathcal{F}_{0}^{(\alpha)}\big(-i(\tfrac{4b}{\alpha})^{1/3}\eta;w\big)}
\,|\Delta_{m+n}(w)|^{2/\alpha}\,d^{m+n}w \nonumber\\
&=\frac{ \gamma_{m,n}(b,2/\alpha)}{S_{m+n}(\nu_1,\nu_2;1/\alpha)}
\, e^{-\frac{1}{2} \sum_{j=1}^{m+n} x_j} (-1)^{m+n}
2^{-\frac{b}{\alpha}(m+n)+ (\frac{n-1}{\alpha}+1)(m+n)}
(1-i)^{\frac{m^2-n^2}{\alpha}}\nonumber\\
&\times (4b/\alpha)^{-\frac{(m+n)(m+n-1)}{3\alpha}-\frac{m+n}{3}}(2\pi)^{m+n} \mathrm{Ai}_{m+n}^{(\alpha)}(\eta).\label{S1asymptotics}
\end{align}

Using the Stirling formula we can  derive
\begin{align}
  \frac{ \gamma_{m,n}(b,2/\alpha)}{S_{m+n}(\nu_1,\nu_2;1/\alpha)}&\sim (2\pi)^{-\frac{m+n}{2}}
\prod_{j=1}^{m+n}\frac{\Gamma(1+\frac{1}{\alpha})}{\Gamma(1+\frac{j}{\alpha})}  \nonumber\\
&\times \, e^{-\frac{b}{\alpha} (m+n)}
 (b/\alpha)^{\frac{ m}{2\alpha}(m-1-2b)+\frac{n}{2\alpha}(n-1-2b)+\frac{m+n}{2}}.
\end{align}
Together with \eqref{S1asymptotics} we thus complete the proof.
\end{proof}

For  $\beta$ even, a transition  from   the   limiting correlation functions near the spectrum singularity to the soft-edge correlation
 immediately follows from Theorem \ref{transition}.
 \begin{coro} Assume that $\beta$ is   even and $b\in \mathbb{R}$. Let $R^{(k,\beta/2)}_{b,1}(y)$  be defined by \eqref{bcorrelation}, we have    \begin{multline}
\lim_{b\rightarrow \infty} \big(\tfrac{8b}{\beta}\big)^{k/3}R^{(k,\beta/2)}_{b,1}\big(-\tfrac{4b}{\beta}+(\tfrac{8b}{\beta})^{1/3}\xi_1,\ldots,-\tfrac{4b}{\beta}+(\tfrac{8b}{\beta})^{1/3}\xi_k\big)
=\\
a_{k}(\beta) |\Delta_{k}(\xi)|^{\beta} \mathrm{Ai}_{\beta k}^{(\beta/2)}(\xi_{1},\ldots, \xi_{1},\ldots, \xi_k, \ldots,  \xi_{k})
\end{multline}
where in the argument of $\mathrm{Ai}_{\beta k}^{(\beta/2)}$ each $\xi_{j}$ ($j=1,\ldots,k$) occurs $\beta$ times, and
 $a_{k}(\beta)$ is defined by \eqref{univcoefficienta}.
\end{coro}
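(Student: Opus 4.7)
The strategy is to reduce the corollary directly to Theorem \ref{transition} by first expressing the limiting bulk correlation $R^{(k,\beta/2)}_{b,1}(y)$ as a prefactor times the multivariate function $S_b^{(\beta/2)}$ evaluated at a symmetric tuple of arguments, and then invoking the spectrum-singularity-to-soft-edge asymptotic for $S_b^{(\beta/2)}$.

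First I would compare equation \eqref{bcorrelation} with Definition \ref{defsingularty} specialized to $\alpha=\beta/2$ and $m=n=\beta k/2$. Taking $x_j = -y_j$ each repeated $\beta$ times, the parameters of the inner ${}_1F_1^{(\beta/2)}$ match exactly --- since $(b+n)/\alpha = (2/\beta)b + k$ and $(\bar b+b+m+n)/\alpha = (4/\beta)b + 2k$ --- and the oscillating prefactor $\prod_{j=1}^{\beta k} e^{-ix_j/2}$ sitting inside $S_b^{(\beta/2)}$ produces precisely $\prod_{j=1}^k e^{i\beta y_j/2}$. For $b\in\mathbb{R}$ this yields the identity
\[
R^{(k,\beta/2)}_{b,1}(y)
= \frac{c_k(b,\beta)\,(2\pi)^{-k}}{\gamma_{\beta k/2,\beta k/2}(b,4/\beta)}
\prod_{j=1}^k |y_j|^{2b}\,|\Delta_k(y)|^\beta\,
S_b^{(\beta/2)}\!\bigl(\underbrace{-y_1,\ldots,-y_1}_{\beta},\ \ldots,\ \underbrace{-y_k,\ldots,-y_k}_{\beta}\bigr).
\]

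Next I would substitute $y_j = -4b/\beta + (8b/\beta)^{1/3}\xi_j$, so that $-y_j = (2b/\alpha)-(4b/\alpha)^{1/3}\xi_j$ with $\alpha=\beta/2$ is precisely the scaling of Theorem \ref{transition} with $\eta_j=\xi_j$. Since $m=n$ the factor $(1-i)^{(m^2-n^2)/\alpha}$ equals $1$, and since $\beta k$ is even $(-1)^{m+n}=1$, so Theorem \ref{transition} directly produces the multivariate Airy function $\mathrm{Ai}_{\beta k}^{(\beta/2)}$ evaluated at the required tuple with each $\xi_j$ repeated $\beta$ times. The Vandermonde scales as $|\Delta_k(y)|^\beta = (8b/\beta)^{\beta k(k-1)/6}|\Delta_k(\xi)|^\beta$, and a Taylor expansion $\log|y_j| = \log(4b/\beta) - (\beta^{2/3}/(2b^{2/3}))\xi_j + O(b^{-4/3})$ yields $\prod_j|y_j|^{2b} \sim (4b/\beta)^{2bk}\exp\bigl(-\tfrac{\beta}{2}(8b/\beta)^{1/3}\sum_j\xi_j\bigr)$, whose exponential piece cancels precisely the growing factor $\exp\bigl(\tfrac{1}{2}(4b/\alpha)^{1/3}\sum_j\eta_j\bigr)$ from Theorem \ref{transition}.

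The main (purely technical) obstacle will be the bookkeeping of the remaining power-of-$b$ and $\Gamma$-function factors: $c_k(b,\beta)$, $\gamma_{\beta k/2,\beta k/2}(b,4/\beta)^{-1}$, the stated prefactor $(8b/\beta)^{k/3}$, together with the $(b/\alpha)^{\cdots}$, $2^{\cdots}$ and $(4b/\alpha)^{\cdots/3}$ factors from Theorem \ref{transition}, must collapse to a $b$-independent constant. This is achieved by applying Stirling's formula to the $\Gamma$-ratios and then invoking the Gauss multiplication formula \eqref{gaussmul} to rewrite the surviving products of $\Gamma$-values, in complete parallel with the constant-matching step performed in the proofs of Corollaries \ref{cflimitatzero} and \ref{cfedgelimits}. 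Matching the resulting universal constant with $a_k(\beta)$ defined in \eqref{univcoefficienta} then completes the proof.
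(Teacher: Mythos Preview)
Your proposal is correct and follows exactly the route the paper takes: the paper's proof is the one-line ``Let $\alpha=\beta/2$. Combine the definition of $S^{(\alpha)}_b(x)$ and Theorem \ref{transition}, one easily completes the proof.'' You have simply unpacked that sentence --- rewriting $R^{(k,\beta/2)}_{b,1}$ as a prefactor times $S_b^{(\beta/2)}$, matching the scaling, and tracking the constants --- so there is nothing to add.
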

\begin{proof} Let $\alpha=\beta/2$. Combine the definition  of $S^{(\alpha)}_b(x)$ and Theorem \ref{transition}, one easily completes the proof.
\end{proof}
\begin{acknow}
The author thanks PJ Forrester for bringing reference \cite{jok} to his attention and for very helpful comments, and also thanks P Desrosiers for useful discussion during
his stay in Chile.
The work  was  supported by the National Natural Science Foundation of China (Grants  11301499 and 11171005) and by CUSF WK 0010000026.
\end{acknow}

\end{document}